\theoremstyle{plain}
\newtheorem{theorem}{Theorem}
\newtheorem{corollary}{Corollary}
\theoremstyle{definition}
\newtheorem{definition}{Definition}
\theoremstyle{remark}
\newtheorem{remark}{Remark}
\newtheorem{example}{Example}
\numberwithin{equation}{section} % to get equations numbered
\def\mbf#1{\mbox{\boldmath$#1$}}
\begin{document}

\title[\hspace{-6mm} Matrix Differential Operator Method for Particular Solution ODE]
{Matrix Differential Operator Method\\ of Finding a Particular Solution\\
to a Nonhomogeneous Linear Ordinary Differential Equation\\ with Constant Coeficients}
  
\author{Jozef Fecenko}
\address{University of Economics\\ Faculty of Economic Informatics\\ Department of Mathematics and Actuarial Science\\  Dolnozemsk\'a St.\\ 832 04 Bratislava\\ Slovakia}
\email{jozef.fecenko@euba.sk}

\thanks{This paper was supported by the Slovak Grant Agency VEGA No. 1/0647/19. I owe thanks to
J\' an Bu\v sa and Graham Luffrum for their valuable comments and advice that helped improve the text in many places.}

\begin{abstract}
The article presents a matrix differential operator and a pseudoinverse matrix
differential operator for finding a particular solution to nonhomogeneous linear ordinary differential equations (ODE) with constant coeficients with special types of the right-hand side. Calculation requires the determination of an inverse or pseudoinverse matrix. If the matrix is singular, the Moore-Penrose pseudoinverse matrix is used for the calculation, which is simply calculated as the inverse submatrix of the considered matrix. It is shown that block matrices are effectively used to calculate a particular solution. 
\end{abstract}

\subjclass[2010]{34A30, 15A09}

\keywords{operator method of solving differential equations, solving differential equations using a matrix differential operator, More-Penrose pseudoinverse matrix, block matrices}

\maketitle

\section{Introduction}

The idea of representing the processes of calculus, differentiation and integration, as operators has a long history that goes back to the prominent German polymath Leibniz, G.W. The French mathematician L. F. A. Arbogast  was the first mathematician to separate the symbols of operation from those of quantity, introducing systematically the operator notation DF for the derivative of the function. This approach was further developed by F. J. Servois who developed convenient notations. He was followed by a school of British and Irish mathematicians. R. B. Carmichael and  G. Boole describing the application of operator methods to ordinary and partial differential equations. Other prominent personalities who contributed to the development of operational calculus were, for example, physicist O. Heaviside, mathematicians T.J. Bromwich, J.R. Carson, J. Mikusi{\'n}ski, N. Wiener, and others \cite{W}. 

Although the problem of solving ordinary nonhomogeneous linear differential equations with constant coefficients is generally known, we will deal with the method of finding a particular solution of differential equation using a matrix differential operator method. This method, like the method of undetermined coefficients, can be used only in special cases, if the right-hand side of the differential equation is typical, i.e. it is a constant, a polynomial function, exponential function $e^{\alpha x}$, sine or cosine functions $\sin{\beta x}$ or $\cos {\beta x}$, or finite sums and products of these functions ($\alpha$, $\beta$ constants). In some cases, the method can be used as a support method for determining the particular solution of a differential equation using the method of undetermined coefficients or the differential operator method or for evaluation of indefinite integrals. 

We introduce the differential operator notation.

It is sometimes convenient to adopt the notation
$Dy$, $D^{2}y$, $D^{3}y,\cdots, D^{n}y$ to denote $\dfrac{dy}{dx}$,  $ \dfrac{d^{2}y}{dx^{2}}$, $\dfrac{d^{3}y}{dx^{3}},\cdots,\dfrac{d^{n}y}{dx^{n}}$. The symbols $Dy$, $D^{2}y,\ldots$ are called {\it{differential operators}}
\cite{Ch} and have properties analogous to those of algebraic quantities \cite{H}. \\

\begin{tabular}{l}
Table 1:  Operator Techniques \cite{Ch}, \cite{S}\\
\end{tabular}\\
\begin{tabular}{|l|l|}
\hline
\textbf{A.} & \\
\qquad $\phi(D)\sum\limits_{k=0}^nc_kR_k(x)$& 
$\sum\limits_{k=0}^nc_k\phi(D)R_k(x)$\\
\hline
\textbf{B.} & \\
\qquad $\phi(D) \cdot \psi(D)$ & $\psi(D) \cdot \phi(D)$ \\
& $\phi(D),\,  \psi(D)$ {\small polynomial operators in $D$}\\
\hline
\textbf{C.} & \\
\qquad $\phi(D)(xf(x))$ &  $x\phi(D)f(x) + \phi^\prime(D)f(x)$\\
\hline
\textbf{D.}  & \\
\qquad$D^ne^{\alpha x}$& $\alpha^ne^{\alpha x}$,  {\small $n$ is non-negative integer}\\
\hline
\textbf{E.}  & \\
\qquad$D^{n}\left\lbrack e^{ax}R(x) \right\rbrack$ &
$e^{ax}\left( D + \alpha \right)^{n}R(x)$\\
&{\small $n$ is non-negative integer}\\
\hline
\textbf{F.}&\\
\qquad$D^{2n}\sin{\beta x}$&$\left(-\beta^2\right)^n\sin{\beta x}$\\
\qquad $D^{2n}\cos{\beta x}$&$\left(-\beta^2\right)^n\cos{\beta x}$\\
\hline 
\end{tabular}
\par
\vspace{3mm}
\noindent Using the operator notation, we shall agree to write the differential equation
\begin{equation}\label{DR} %%(1,1)
a_{n}y^{(n)} + a_{n - 1}y^{(n - 1)} + \ldots + a_{1}y^{\prime} + a_{0}y = f(x),\quad a_n \neq 0,\ a_i\in \mathbb{R}
\end{equation}
as
\begin{equation}\label{DRD}
\left(a_{n}D^{n} + a_{n - 1}D^{n - 1} + \cdots + a_{1}D + a_{0}\right)y = f(x)
\end{equation}
or briefly
\begin{equation}\label{DRDp}
\phi(D)y = f(x),
\end{equation}
where
\begin{equation}\label{DO}
\phi(D) = a_nD^n + a_{n - 1}D^{n - 1} + \cdots + a_{1}D + a_{0}
\end{equation}
is called an \emph{operator polynomial} in \(D.\)
If we want to emphasize the degree of the polynomial operator \eqref{DO}, we shall
write it in the form \(\phi_{n}( D )\).
\vspace{5mm}

The use of the operator calculus for solving linear differential equations is well dealt with in several publications, e.g. \cite{H}, \cite{Ch}, \cite{S}. 

\section{Matrix differential operator} 
\begin{definition}
If \(S = \{\mbf{v}_{1},\mbf{v}_{2},\ldots,\mbf{v}_{n}\}\)
is a set of vectors in a vector space \(V,\) then the set of all linear
combination of
\(\mbf{v}_{1},\mbf{v}_{2}, \ldots,\mbf{v}_{n}\) is called the \emph{span} of
\(\mbf{v}_{1},\mbf{v}_{2},\ldots,\mbf{v}_{n}\) and is denoted by
\(\text{span}(\mbf{v}_{1},\mbf{v}_{2},\ldots,\mbf{v}_{n})\)
or \text{span}(S).
\end{definition}

Let \(G\) be a~vector space of all differentiable function. Consider the
subspace \(V \subset G\) given by
\begin{equation}\label{spanfi}
V=\text{span}\left( f_{1}(x),f_{2}(x),\cdots,f_{n}(x) \right),
\end{equation}
where we assume that the functions \(f_{1}(x),f_{2}(x),\cdots,f_{n}(x)\)
are linearly independent. Since the set 
\(B = \{ f_{1}(x),f_{2}(x),\cdots,f_{n}(x)\}\) is linearly independent, it is a~basis for V.\newline
The functions \(f_{i}(x),\ i = 1,2,\ldots,n\) expressed in basis \(\mbf{B}\) using base vector coordinates are usually written
\[\left\lbrack f_{1}(x)\right\rbrack_{B} = \begin{bmatrix}
1 \\
0 \\
\vdots \\
0 \\
\end{bmatrix},\ \left\lbrack f_{2}(x)\right\rbrack_{B} = \begin{bmatrix}
0 \\
1 \\
\vdots \\
0 \\
\end{bmatrix},\cdots,\left\lbrack f_{n}(x) \right\rbrack_{B} = \begin{bmatrix}
0 \\
0 \\
 \vdots \\
1 \\
\end{bmatrix}\]
The vector \(\left\lbrack f_{i}(x) \right\rbrack_{B}\) has in the \emph{i-}th row 1 and~0 otherwise.

Further, assume that the differential operator \( D\) maps \(V\)  into itself\\
 Let 
\[D( f_{i}( x) )= \sum_{j = 1}^{n}c_{ij}f_{j}( x),\quad  i = 1,2,\cdots,n,\]
where \(c_{ij} \in \mathbb{R}\),  \(i,j = 1,2,\ldots,n\) are constants.
Then
\[\left\lbrack D\left( f_{i}(x) \right) \right\rbrack_{B} = \begin{bmatrix}
c_{i1} \\
c_{i2} \\
 \vdots \\
c_{in} \\
\end{bmatrix},\quad  i,j = 1,2,\ldots,n \]
and (see \cite{P})
\begin{equation}\label{Poole}
\left\lbrack D \right\rbrack_{B} = \left\lbrack \left\lbrack D\left( f_{1}(x) \right) \right\rbrack_{B} \Shortstack{. . . .}\left\lbrack D\left( f_{2}(x) \right) \right\rbrack_{B} \Shortstack{. . . .}\cdots  \Shortstack{. . . .}\left\lbrack D\left( f_{n}(x) \right) \right\rbrack_{B} \right\rbrack = \begin{bmatrix}
c_{11} & c_{21} & \cdots & c_{n1} \\
c_{12} & c_{22} & \cdots & c_{n2} \\
 \vdots & \vdots & \ddots & \vdots \\
c_{1n} & c_{2n} & \cdots & c_{nn} \\
\end{bmatrix}
\end{equation}
If
\[f(x) = \sum_{i = 1}^{n}{\alpha_{i}f_{i}(x),\quad \alpha_{i} \in \mathbb{R},\quad i = 1,2,\cdots,n},\quad  f(x) \in V \]
then
\[\left\lbrack f(x) \right\rbrack_{B} = \begin{bmatrix}
\alpha_{1} \\
\alpha_{2} \\
 \vdots \\
\alpha_{n} \\
\end{bmatrix}\]
We express what is the derivative of the function \(f(x)\)
\[Df(x) = \sum_{i = 1}^{n}{\alpha_{i}Df_{i}(x)} = \sum_{i = 1}^{n}\alpha_{i}\sum_{j = 1}^{n}{c_{ij}f_{j}(x)} = \sum_{j = 1}^{n}{\sum_{i = 1}^{n}\alpha_{i}c_{ij}f_{j}(x)}\]
respectively
\[\left\lbrack D( f(x)) \right\rbrack_{B}= \begin{bmatrix}
\sum\limits_{i=1}^n c_{i1}\alpha_{i} \\
\sum\limits_{i=1}^n c_{i2}\alpha_{i} \\
 \vdots \\
\sum\limits_{i=1}^n c_{in}\alpha_{i} \\
\end{bmatrix}=\begin{bmatrix}
c_{11} & c_{21} & \cdots & c_{n1} \\
c_{12} & c_{22} & \cdots & c_{n2} \\
 \vdots & \vdots & \ddots & \vdots \\
c_{1n} & c_{2n} & \cdots & c_{nn} \\
\end{bmatrix}\begin{bmatrix}
\alpha_{1} \\
\alpha_{2} \\
 \vdots \\
\alpha_{n} \\
\end{bmatrix}\]
Let us next simply denote \(\left\lbrack D \right\rbrack_{B}\) as  \(\mathcal{D}_B\). 

The matrix \(\mathcal{D}_B\) we will called a \emph{matrix differential operator  corresponding to a vector space} \(V\)  \emph{with the considered basis} \(B\). 

Denote  
\[\left\lbrack\mathcal{D}(f(x) \right\rbrack_{B}=\mbf{f}_B^\prime = \begin{bmatrix}
\beta_{1} \\
\beta_{2} \\
\mbf{\vdots} \\
\beta_{n} \\
\end{bmatrix}\,\,\text{and\ \ }\left\lbrack f(x) \right\rbrack_{B} = \mbf{f}_B\]
then
\[\mbf{f}_B^\prime=\mathcal{D}_B\mbf{f}_B\]

Note that the matrix transformation \(\mathcal{D}_B:V \rightarrow V\) defined by
\[\mathcal{D}_B\mbf{f}_B = \mbf{f}_B^\prime\]
is a linear transformation.

As mentioned in \eqref{Poole}, \emph{i-}th, \(\left( i = 1,2,\cdots,n \right) \) column of the matrix \(\mathcal{D}\) expresses the derivative of the function \(f_{i}(x).\)

The following properties of the matrix differential operator are given without proof.
\begin{theorem}\label{1} Let \(\mathcal{D}_B\) be a matrix differential
operator of vector space \(V\) with a basis \(B\). Then
\begin{itemize}
\item
\(\mathcal{D}_B\left(\mbf{f}_B+\mbf{g}_B\right)=\mathcal{D}_B\mbf{f}_B+\mathcal{D}_B\mbf{g}_B\)
\item
  \(\mathcal{D}_B\left( c\mbf{f}_B \right)=c\mathcal{D}_B\left(\mbf{f}_B  \right), \quad c\in\mathbb{R}\)
\item
  \(\mbf{f}_B^{\prime\prime}= \mathcal{D}_B\left(\mathcal{D}_B\mbf{f}_B\right)= \mathcal{D}_B^{2}\mbf{f}_B\)
\item
  \(\mbf{f}_B^{(n)}=\mathcal{D}_B\mbf{f}_B^{( n - 1) }, \quad
\mbf{f}_B^{( 0 )}=\mbf{f}_B\)
\item
  \(\mbf{f}_B^{( n )}=\mathcal{D}_B^{{n}}\mbf{f}_B\)
\item
  \emph{i-}th \(\left( i = 1,2,\cdots,n \right)\ \)column of the matrix
  \(\mathcal{D}_B^{n}\) expresses \it{n}-th derivative of function \(f_{i}( x).\)
\end{itemize}
\end{theorem}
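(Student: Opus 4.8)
The plan is to derive all six items from two facts that are already in place before the statement: differentiation restricted to $V$ is linear and, by the standing hypothesis, maps $V$ into itself, and in the basis $B$ it is represented by $\mathcal{D}_B$, i.e. $[D(g(x))]_B=\mathcal{D}_B\,[g(x)]_B$ for every $g\in V$. Granting this, the first two items are immediate: $\mathcal{D}_B$ is an $n\times n$ real matrix and $\mbf{f}_B,\mbf{g}_B\in\mathbb{R}^n$, so $\mathcal{D}_B(\mbf{f}_B+\mbf{g}_B)=\mathcal{D}_B\mbf{f}_B+\mathcal{D}_B\mbf{g}_B$ and $\mathcal{D}_B(c\mbf{f}_B)=c\,\mathcal{D}_B\mbf{f}_B$ are nothing but the distributive laws of matrix--vector multiplication; equivalently, they follow from linearity of $D$ on $V$ composed with the coordinate isomorphism $[\cdot]_B\colon V\to\mathbb{R}^n$.

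For the third item I would apply the coordinate rule twice, which is legitimate precisely because the hypothesis $D(V)\subseteq V$ guarantees $Df(x)\in V$: taking $g=Df(x)$ gives $\mbf{f}_B^{\prime\prime}=[D(Df(x))]_B=\mathcal{D}_B\,[Df(x)]_B=\mathcal{D}_B(\mathcal{D}_B\mbf{f}_B)=\mathcal{D}_B^{2}\mbf{f}_B$, the last step being associativity of matrix multiplication. The fourth item is just the recursive definition $f^{(n)}(x)=D\!\left(f^{(n-1)}(x)\right)$ together with the coordinate rule, so that $\mbf{f}_B^{(n)}=[D(f^{(n-1)}(x))]_B=\mathcal{D}_B\,\mbf{f}_B^{(n-1)}$, with $\mbf{f}_B^{(0)}=\mbf{f}_B$ by convention; here one uses $f^{(k)}(x)\in V$ for every $k$, which is a one-line induction from $D(V)\subseteq V$. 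The fifth item then follows by induction on $n$: the base case $n=0$ is $\mathcal{D}_B^{0}=I$, the $n\times n$ identity matrix, and $\mbf{f}_B^{(0)}=\mbf{f}_B$; the inductive step is $\mbf{f}_B^{(n)}=\mathcal{D}_B\mbf{f}_B^{(n-1)}=\mathcal{D}_B\!\left(\mathcal{D}_B^{\,n-1}\mbf{f}_B\right)=\mathcal{D}_B^{\,n}\mbf{f}_B$.

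For the sixth item I would use that the $i$-th column of any matrix $A$ equals $Ae_i$, where $e_i$ is the $i$-th standard basis vector of $\mathbb{R}^n$. Since $[f_i(x)]_B=e_i$, applying the fifth item to $f_i$ shows that the $i$-th column of $\mathcal{D}_B^{\,n}$ is $\mathcal{D}_B^{\,n}e_i=\mathcal{D}_B^{\,n}[f_i(x)]_B=[f_i^{(n)}(x)]_B$, i.e. it is the coordinate vector of the $n$-th derivative of $f_i(x)$ in the basis $B$, as claimed. None of these computations is difficult; the only point that genuinely needs attention is the closure of $V$ under repeated differentiation, that is, promoting the hypothesis $D(V)\subseteq V$ to $D^{k}(V)\subseteq V$ for all $k\geq 0$, since otherwise the coordinate vectors $\mbf{f}_B^{(k)}$ and the action of the powers $\mathcal{D}_B^{\,k}$ on them would not be well defined. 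This closure is the only real obstacle, and it is settled by a short induction.
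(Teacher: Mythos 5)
Your proposal is correct. Note, however, that the paper itself offers no argument here: the theorem is explicitly introduced with the remark that its properties ``are given without proof,'' so there is no paper proof to compare yours against. What you have written is exactly the standard justification that the paper implicitly relies on, namely the coordinate rule $[D(g(x))]_B=\mathcal{D}_B[g(x)]_B$ established in the discussion preceding the statement, combined with linearity of matrix--vector multiplication, a short induction for the powers $\mathcal{D}_B^{\,n}$, and the observation that the $i$-th column of a matrix is its product with $e_i=[f_i(x)]_B$. You are also right to single out the closure property $D^{k}(V)\subseteq V$ as the one point that genuinely requires an argument (a one-line induction from the standing hypothesis $D(V)\subseteq V$); without it the iterated coordinate vectors $\mbf{f}_B^{(k)}$ would not be defined, and the paper passes over this silently. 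So your write-up correctly fills a gap the paper leaves open rather than replicating or diverging from an existing proof.
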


\begin{example}\label{exam1}
 Let us consider
\begin{equation}\label{eq1}
V = \text{span}(xe^{2x}\sin{3x}, xe^{2x}\cos{3x},e^{2x}\sin{3x},e^{2x}\cos{3x})
\end{equation}
The Wronskian
\(W(xe^{2x}\sin{3x},  xe^{2x}\cos{3x},e^{2x}\sin{3x},e^{2x}\cos{3x}) = 324 \cdot e^{8x} \neq 0.\)
It follows that the functions in \eqref{eq1} are linearly independent. Then the set
\(B = \{xe^{2x}\sin{3x},  xe^{2x}\cos{3x},e^{2x}\sin{3x},e^{2x}\cos{3x} \}\)
is a basis for V.

Applying differential operator \emph{D} to a general element of
\emph{V}, we see that
\[\aligned
D&\left({ax}e^{2x}\sin{3x} + bxe^{2x}\cos{3x} +ce^{2x}\sin{3x} + de^{2x}\cos{3x} \right) \\
&=( 2a - 3b )xe^{2x}\sin{3x} + ( 3a + 2b)xe^{2x}\cos{3x}+(a + 2c - 3d)e^{2x}\sin{3x} \\
&+(b + 3c + 2d)e^{2x}\cos{3x},
\endaligned\]
which is again in \emph{V.}

We found the corresponding matrix differential operator \(\mathcal{D}_B\)
in the vector space \(V\) with the basis \(B\). Here the construction of
\(\mathcal{D}_B\) is shown schematically
\end{example}

 \parbox{\textwidth}{
 \hspace{6.6cm}\rotatebox{90}{\small $D\left(xe^{2x}\sin{3x}\right)$}
\hspace{0.05cm}
\rotatebox{90}{\small $D\left(xe^{2x}\cos{3x}\right)$}
\hspace{0.03cm}
\rotatebox{90}{\small $D\left(e^{2x}\sin{3x}\right)$}
\hspace{0.01cm}
\rotatebox{90}{\small$D\left(e^{2x}\cos{3x}\right)$}
\vspace{-0.1cm}
\begin{equation}\label{schemat}
\mathcal{D}_B=\begin{matrix}
xe^{2x}\sin{3x}\\
xe^{2x}\cos{3x}\\
e^{2x}\sin{3x}\\
e^{2x}\cos{3x}\\
\end{matrix}
\begin{bmatrix}
2 & -3 & 0 & 0\\
3 &  2 & 0 & 0\\
1 & 0  & 2 & -3\\
0 & 1  & 3 & 2\\
\end{bmatrix}
\end{equation}
  }%

Let us calculate the first and second derivative of the function
\[f(x) = xe^{2x}\sin{3x} + 2xe^{2x}\cos{3x} - 2e^{2x}\cos{3x} \]
using the matrix differential operator \eqref{schemat}.
We have
\[\left\lbrack\mathcal{D}f(x)\right\rbrack_{B}=\mathcal{D}_{B}\left\lbrack f(x) \right\rbrack_{B} = \begin{bmatrix}
2 & - 3 & 0 & 0 \\
3 & 2 & 0 & 0 \\
1 & 0\  & 2 & - 3 \\
0 & 1 & 3 & 2 \\
\end{bmatrix}\begin{bmatrix}
1 \\
2 \\
0 \\
 - 2 \\
\end{bmatrix} = \begin{bmatrix}
 - 4 \\
7 \\
7 \\
 - 2 \\
\end{bmatrix}\]

So that
\[Df(x) = - 4xe^{2x}\sin{3x} + 7xe^{2x}\cos{3x} + 7e^{2x}\sin{3x} - 2e^{2x}\cos{3x}\]
The second derivative
\[\mbf{f}_B^{\prime\prime}=\mathcal{D}_B\mbf{f}_B^\prime =\begin{bmatrix}
2 & - 3 & 0 & 0 \\
3 & 2 & 0 & 0 \\
1 & 0\  & 2 & - 3 \\
0 & 1 & 3 & 2 \\
\end{bmatrix}\begin{bmatrix}
 - 4 \\
7 \\
7 \\
 - 2 \\
\end{bmatrix} = \begin{bmatrix}
 - 29 \\
2 \\
16 \\
24 \\
\end{bmatrix}\]

\[D^{2}f(x) = - 29xe^{2x}\sin{3x} + 2xe^{2x}\cos{3x} + 16e^{2x}\sin{3x} + 24e^{2x}\cos{3x}\]

For calculating higher powers of the matrix \(\mathcal{D}_B\) it is sometimes advantageous to express it as a block matrix (see example 11) or use Jordan matrix decomposition.

The point of a matrix differential operator is not that  this method is easier than a direct differentiation. Indeed, once the matrix $\mathcal{D}_B$ has been established, it is easy to find the differentials with little to do. What is significant is that matrix methods can be used at all in what appears, on the surface, to be a~calculus problem.
\pagebreak
\section{Some properties of block matrices and pseudoinverse matrix}

\begin{theorem}\label{T2} \cite{Kl}
Let \(\mbf{A}\) be a~matrix partitioned into four $2\times 2$ blocks

\[\mbf{A} = \begin{bmatrix}
\mbf{P} & \mbf{Q} \\
\mbf{R} & \mbf{S} \\
\end{bmatrix}\]
where \(\mbf{R}\) is a regular matrix of the order $r$. Then the rank of matrix \(\mbf{A}\) is equal to $r$ if and only if 
\[\mbf{Q} = \mbf{P}\mbf{R}^{- 1}\mbf{S}.\]
\end{theorem}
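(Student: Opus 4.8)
The plan is to reduce $\mbf{A}$ by invertible block row and column operations to an anti--block--diagonal normal form from which the rank can be read off directly, and then to recognize $\mbf{Q}-\mbf{P}\mbf{R}^{-1}\mbf{S}$ as precisely the obstruction to the rank being $r$. Concretely, first I would fix the block sizes: since $\mbf{R}$ is square of order $r$, write $\mbf{P}$ as $p\times r$, $\mbf{Q}$ as $p\times s$, $\mbf{S}$ as $r\times s$, so that $\mbf{A}$ is $(p+r)\times(r+s)$. Note at once that $\mbf{R}$ is a nonsingular $r\times r$ submatrix of $\mbf{A}$, hence $\operatorname{rank}\mbf{A}\ge r$ unconditionally; the theorem is really an assertion about when this lower bound is attained.

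Next I would left--multiply $\mbf{A}$ by the invertible matrix $\left[\begin{smallmatrix}\mbf{I}_p & -\mbf{P}\mbf{R}^{-1}\\ \mbf{0}&\mbf{I}_r\end{smallmatrix}\right]$, which clears the $(1,1)$ block and turns the $(1,2)$ block into $\mbf{Q}-\mbf{P}\mbf{R}^{-1}\mbf{S}$, and then right--multiply the result by the invertible matrix $\left[\begin{smallmatrix}\mbf{I}_r & -\mbf{R}^{-1}\mbf{S}\\ \mbf{0}&\mbf{I}_s\end{smallmatrix}\right]$, which clears the $(2,2)$ block. The upshot is that $\mbf{A}$ is equivalent, via multiplication by invertible matrices on both sides, to $\left[\begin{smallmatrix}\mbf{0}&\mbf{Q}-\mbf{P}\mbf{R}^{-1}\mbf{S}\\ \mbf{R}&\mbf{0}\end{smallmatrix}\right]$. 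Since such multiplication preserves rank, and since a permutation of the block columns brings this matrix to the block--diagonal form with diagonal blocks $\mbf{R}$ and $\mbf{Q}-\mbf{P}\mbf{R}^{-1}\mbf{S}$, we obtain $\operatorname{rank}\mbf{A}=\operatorname{rank}\mbf{R}+\operatorname{rank}\!\left(\mbf{Q}-\mbf{P}\mbf{R}^{-1}\mbf{S}\right)=r+\operatorname{rank}\!\left(\mbf{Q}-\mbf{P}\mbf{R}^{-1}\mbf{S}\right)$. Hence $\operatorname{rank}\mbf{A}=r$ if and only if $\operatorname{rank}\!\left(\mbf{Q}-\mbf{P}\mbf{R}^{-1}\mbf{S}\right)=0$, i.e. if and only if $\mbf{Q}=\mbf{P}\mbf{R}^{-1}\mbf{S}$, which is the claim.

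All the matrix arithmetic here is routine; the only step deserving an explicit line is the additivity of rank for a block--diagonal matrix, which holds because the column space of $\operatorname{diag}\!\left(\mbf{R},\,\mbf{Q}-\mbf{P}\mbf{R}^{-1}\mbf{S}\right)$ is the (internal) direct sum of the two blocks' column spaces embedded in complementary coordinate subspaces. One can even sidestep this remark entirely: the nonsingularity of $\mbf{R}$ already forces the last $r$ columns of the reduced matrix to be linearly independent and to account for all of the bottom $r$ coordinates, so the remaining columns contribute to the rank exactly the rank of $\mbf{Q}-\mbf{P}\mbf{R}^{-1}\mbf{S}$. I expect the principal nuisance in writing this out to be purely bookkeeping --- keeping the four block dimensions and the two elementary block factors mutually consistent --- rather than anything conceptual; the substance is just the Schur--complement rank identity $\operatorname{rank}\mbf{A}=\operatorname{rank}\mbf{R}+\operatorname{rank}(\mbf{Q}-\mbf{P}\mbf{R}^{-1}\mbf{S})$ specialized to the case $\operatorname{rank}\mbf{A}=r$.
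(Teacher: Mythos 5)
Your proof is correct, and its first move is exactly the paper's: left-multiplying (equivalently, adding to the first block row) by \(-\mbf{P}\mbf{R}^{-1}\) times the second block row to reach \(\begin{bmatrix}\mbf{0} & \mbf{Q}-\mbf{P}\mbf{R}^{-1}\mbf{S}\\ \mbf{R} & \mbf{S}\end{bmatrix}\). Where you go further is the second, column operation, which eliminates \(\mbf{S}\) and yields the full Schur-complement identity \(\operatorname{rank}\mbf{A}=r+\operatorname{rank}(\mbf{Q}-\mbf{P}\mbf{R}^{-1}\mbf{S})\); from this both directions of the equivalence drop out immediately. The paper instead stops after the row operation and argues verbally that ``for the two matrices to have the same rank the block \(\mbf{Q}-\mbf{P}\mbf{R}^{-1}\mbf{S}\) must vanish,'' which as stated conflates rank preservation under the row operation (automatic) with the real point, namely that a nonzero block sitting above the full-row-rank block \((\mbf{R}\ \ \mbf{S})\) would push the rank above \(r\); your version supplies precisely the missing justification (rank additivity for the block-anti-diagonal form, or equivalently your remark that the last \(r\) rows are already accounted for by \(\mbf{R}\)). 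So the two arguments share the same decomposition idea, but yours is the tighter and more complete rendering, and it also makes explicit the unconditional bound \(\operatorname{rank}\mbf{A}\ge r\) that the paper leaves implicit.
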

\begin{proof}
To the first  block row of the matrix \(\mbf{A}\) we add
the second block row multiplied from the left by the matrix
\(- \mbf{P}\mbf{R}^{- 1}\). This adjustment adds the linear
combination of matrix rows \(( \mbf{R\quad  S})\) to
the other matrix rows. After this adjustment, we get the matrix
\begin{equation}\label{rank}
\begin{bmatrix}
\mbf{0} & \mbf{Q - P}\mbf{R}^{- 1}\mbf{S} \\
\mbf{R} & \mbf{S} \\
\end{bmatrix}.
\end{equation}
For matrix \(\mbf{A}\) and matrix \eqref{rank} to have the same rank, the
matrix \(\mbf{Q - P}\mbf{R}^{- 1}\mbf{S}\) must be equals
\(\mbf{0}.\) It follows that
\(\mbf{Q = P}\mbf{R}^{- 1}\mbf{S}.\)
\end{proof}

\begin{definition} Let \(\mbf{A}\) be a \(m \times n\) matrix. The matrix \(\mbf{X}\) for which
\[\mbf{AXA} = \mbf{A},\]
is called the \emph{pseudoinverse matrix} of matrix A. The pseudoinverse
matrix of a matrix \(\mbf{A}\) is denoted \(\mbf{A}^{-}.\)
\end{definition}

\begin{theorem}\label{T3} 
Let \(\mbf{A}\) be a \(m \times n\) matrix partitioned into four $2\times 2$ blocks
\[\mbf{A} = \begin{bmatrix}
\mbf{P} & \mbf{Q} \\
\mbf{R} & \mbf{S} \\
\end{bmatrix}\]
and let~the rank of the matrix \(\mbf{A}\) be $r$. If the matrix \(\mbf{R}\) is regular
of order $r$, then the \(n \times m\) matrix
\[\mbf{A}^{-} = \begin{bmatrix}
\mbf{0} & \mbf{R}^{- 1} \\
\mbf{0} & \mbf{0} \\
\end{bmatrix}\]
 is the pseudoinverse of the matrix \(\mbf{A}.\)
\end{theorem}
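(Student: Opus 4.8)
The plan is to verify the defining identity \(\mbf{A}\mbf{A}^{-}\mbf{A} = \mbf{A}\) directly by block multiplication, and then to remove the single block that does not already match \(\mbf{A}\) by invoking Theorem~\ref{T2}. First I would pin down the block sizes: since \(\mbf{R}\) is regular of order \(r\), the left block column of \(\mbf{A}\) has \(r\) columns and the bottom block row has \(r\) rows, so \(\mbf{P}\) is \((m-r)\times r\), \(\mbf{Q}\) is \((m-r)\times(n-r)\), and \(\mbf{S}\) is \(r\times(n-r)\); correspondingly the proposed \(\mbf{A}^{-}\) is \(n\times m\) with the block \(\mbf{R}^{-1}\) occupying the \((1,2)\) position. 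With this bookkeeping all the products below are conformable.

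Next I would compute \(\mbf{A}\mbf{A}^{-}\). Because the first block column of \(\mbf{A}^{-}\) is zero, only the column \(\begin{bmatrix}\mbf{R}^{-1}\\ \mbf{0}\end{bmatrix}\) contributes, and one obtains
\[\mbf{A}\mbf{A}^{-} = \begin{bmatrix}\mbf{0} & \mbf{P}\mbf{R}^{-1}\\ \mbf{0} & \mbf{I}_r\end{bmatrix},\]
where \(\mbf{I}_r\) is the identity matrix of order \(r\). Multiplying on the right by \(\mbf{A}\) then gives
\[\mbf{A}\mbf{A}^{-}\mbf{A} = \begin{bmatrix}\mbf{P}\mbf{R}^{-1}\mbf{R} & \mbf{P}\mbf{R}^{-1}\mbf{S}\\ \mbf{R} & \mbf{S}\end{bmatrix} = \begin{bmatrix}\mbf{P} & \mbf{P}\mbf{R}^{-1}\mbf{S}\\ \mbf{R} & \mbf{S}\end{bmatrix},\]
so three of the four blocks already coincide with those of \(\mbf{A}\), and only the \((1,2)\) block still needs to be identified.

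Finally, since \(\mbf{A}\) has rank \(r\) and \(\mbf{R}\) is regular of order \(r\), Theorem~\ref{T2} yields \(\mbf{Q} = \mbf{P}\mbf{R}^{-1}\mbf{S}\); substituting this into the last display gives \(\mbf{A}\mbf{A}^{-}\mbf{A} = \mbf{A}\), which is exactly the condition for \(\mbf{A}^{-}\) to be a pseudoinverse of \(\mbf{A}\). I do not expect a genuine obstacle here beyond keeping the block dimensions straight; the one substantive point is that the rank hypothesis is precisely what is needed to collapse the stray block \(\mbf{P}\mbf{R}^{-1}\mbf{S}\) back to \(\mbf{Q}\), which is why Theorem~\ref{T2} is cited rather than reproved.
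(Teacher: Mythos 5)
Your proof is correct and takes essentially the same approach as the paper: a direct block multiplication showing \(\mbf{A}\mbf{A}^{-}\mbf{A} = \begin{bmatrix}\mbf{P} & \mbf{P}\mbf{R}^{-1}\mbf{S}\\ \mbf{R} & \mbf{S}\end{bmatrix}\), followed by an appeal to Theorem~\ref{T2} to identify \(\mbf{P}\mbf{R}^{-1}\mbf{S}\) with \(\mbf{Q}\). The only difference is that you make explicit the intermediate product \(\mbf{A}\mbf{A}^{-}\) and the block dimensions, which the paper leaves implicit.
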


\begin{proof} Compute 
\[\mbf{A}\mbf{A}^{-}\mbf{A} = \begin{bmatrix}
\mbf{P} & \mbf{Q} \\
\mbf{R} & \mbf{S} \\
\end{bmatrix}\begin{bmatrix}
\mbf{0} &\mbf{R}^{- 1} \\
 \mbf{0} & \mbf{0} \\
\end{bmatrix}\begin{bmatrix}
\mbf{P} & \mbf{Q} \\
\mbf{R} & \mbf{S} \\
\end{bmatrix} = \begin{bmatrix}
\mbf{P} & \mbf{P}\mbf{R}^{- 1}\mbf{S} \\
\mbf{R} & \mbf{S} \\
\end{bmatrix} = \begin{bmatrix}
\mbf{P} & \mbf{Q} \\
\mbf{R} & \mbf{S} \\
\end{bmatrix} = \mbf{A}\]
We have used the consequence of the Theorem \ref{T2}, i.e.
\(\mbf{Q} =\mbf{P}\mbf{R}^{- 1}\mbf{S}.\)
\end{proof}

\begin{definition}
 The pseudoinverse matrix \(\mbf{A}^{-}\) of the matrix \(\mbf{A}\) satisfying all of the following four conditions
\begin{itemize}
 \item[(i)] 
\(\mbf{A}\mbf{A}^{-}\mbf{A} = \mbf{A}\),
  (\(\mbf{A}^{-}\) is the pseudoinverse matrix of the matrix
  \(\mbf{A})\);
 \item[(ii)] 
 \(\mbf{A}^{-}\mbf{A}\mbf{A}^{-} = \mbf{A}^{-}\),
 (\(\mbf{A}\) is the pseudoinverse matrix of the matrix
  \(\mbf{A}^{-})\);
\item[(iii)] 
 \(\left( \mbf{A}\mbf{A}^{-} \right)^{T}=\mbf{A}\mbf{A}^{-},\) (\(\mbf{A}\mbf{A}^{-}\)   is a symmetric matrix, \(\left( \cdot \right)^{T}\) means a transposed  matrix);
\item[(iv)] 
 \(\left( \mbf{A}^{-}\mbf{A} \right)^{T}=\mbf{A}^{-}\mbf{A}\),
  (\(\mbf{A}^{-}\mbf{A}\) is a symmetric matrix)
\end{itemize}
is called the Moore-Penrose pseudoinverse of the matrix \(\mbf{A}\),
denoted by \(\mbf{A}^{+}.\)
\end{definition}

\begin{theorem}\label{T33}
Let \(\mbf{A}\) be a \(m \times n\) matrix
partitioned into four $2\times 2$ blocks
\[\mbf{A} = \begin{bmatrix}
\mbf{0} & \mbf{0} \\
\mbf{R} & \mbf{0} \\
\end{bmatrix}\]
and let the rank of matrix \(\mbf{A}\) be \(r.\) If matrix
\(\mbf{R}\) is regular of order \(r\), then
\[\mbf{A}^{-} = \begin{bmatrix}
\mbf{0} & \mbf{R}^{- 1} \\
\mbf{0} & \mbf{0} \\
\end{bmatrix}\]
\(n \times m\) is the Moore-Penrose pseudoinverse matrix of the matrix
\(\mbf{A}\), thus \linebreak \(\mbf{A}^{-}=\mbf{A}^{+}.\)
\end{theorem}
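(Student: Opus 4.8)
The plan is to verify directly that the matrix $\mathbf{A}^{-}$ exhibited in the statement satisfies all four Moore--Penrose conditions (i)--(iv) from the preceding definition; since the Moore--Penrose pseudoinverse is unique, this forces $\mathbf{A}^{-}=\mathbf{A}^{+}$. First I would fix the block sizes so that every product below is legitimate: with $\mathbf{R}$ of order $r$, the zero blocks of $\mathbf{A}$ have sizes $(m-r)\times r$, $(m-r)\times(n-r)$ and $r\times(n-r)$, while the blocks of $\mathbf{A}^{-}$ have sizes $r\times(m-r)$, $r\times r$, $(n-r)\times(m-r)$ and $(n-r)\times r$, so that indeed $\mathbf{A}$ is $m\times n$ and $\mathbf{A}^{-}$ is $n\times m$.

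Next I would compute the two mixed products directly in block form:
\[
\mathbf{A}\mathbf{A}^{-}=\begin{bmatrix}\mathbf{0}&\mathbf{0}\\[2pt]\mathbf{0}&\mathbf{R}\mathbf{R}^{-1}\end{bmatrix}=\begin{bmatrix}\mathbf{0}&\mathbf{0}\\[2pt]\mathbf{0}&\mathbf{I}_r\end{bmatrix},\qquad
\mathbf{A}^{-}\mathbf{A}=\begin{bmatrix}\mathbf{R}^{-1}\mathbf{R}&\mathbf{0}\\[2pt]\mathbf{0}&\mathbf{0}\end{bmatrix}=\begin{bmatrix}\mathbf{I}_r&\mathbf{0}\\[2pt]\mathbf{0}&\mathbf{0}\end{bmatrix}.
\]
Both matrices are block-diagonal with an identity block and a zero block, hence symmetric, which gives conditions (iii) and (iv) at once. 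Multiplying each of these on the appropriate side by $\mathbf{A}$ or $\mathbf{A}^{-}$ then yields $\mathbf{A}\mathbf{A}^{-}\mathbf{A}=\mathbf{A}$ and $\mathbf{A}^{-}\mathbf{A}\mathbf{A}^{-}=\mathbf{A}^{-}$, i.e.\ conditions (i) and (ii); alternatively, (i) is simply the special case $\mathbf{P}=\mathbf{Q}=\mathbf{S}=\mathbf{0}$ of Theorem \ref{T3}, so it may be cited rather than recomputed. Finally I would invoke the uniqueness of the Moore--Penrose pseudoinverse to conclude that $\mathbf{A}^{-}=\mathbf{A}^{+}$.

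I do not expect any genuine obstacle: each verification is a one-line block multiplication. The only points deserving a little care are the bookkeeping of block dimensions (so that $\mathbf{R}\mathbf{R}^{-1}$ and $\mathbf{R}^{-1}\mathbf{R}$ both make sense and both equal $\mathbf{I}_r$), and making explicit the appeal to uniqueness of the Moore--Penrose inverse, since the four conditions only identify it as \emph{the} matrix $\mathbf{A}^{+}$ once uniqueness is granted.
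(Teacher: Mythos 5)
Your proposal is correct and follows essentially the same route as the paper: a direct block-multiplication verification of the four Moore--Penrose conditions, with the products $\mbf{A}\mbf{A}^{-}$ and $\mbf{A}^{-}\mbf{A}$ coming out as the incomplete identity matrices $\begin{bmatrix}\mbf{0}&\mbf{0}\\ \mbf{0}&\mbf{I}\end{bmatrix}$ and $\begin{bmatrix}\mbf{I}&\mbf{0}\\ \mbf{0}&\mbf{0}\end{bmatrix}$, exactly as in the paper's proof. Your added remarks on block dimensions and on uniqueness of the Moore--Penrose inverse are harmless refinements, not a different argument.
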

\begin{proof} We will verify all four conditions of definition 2.\vspace{2mm}
\begin{itemize}
\item[(i)]
  \(\mbf{A}\mbf{A}^{-}\mbf{A =}\begin{bmatrix} \mbf{0} & \mbf{0} \\ \mbf{R} & \mbf{0} \\ \end{bmatrix}\begin{bmatrix} \mbf{0} & \mbf{R}^{- 1} \\ \mbf{0} & \mbf{0} \\ \end{bmatrix}\begin{bmatrix} \mbf{0} & \mbf{0} \\ \mbf{R} & \mbf{0} \\ \end{bmatrix} = \begin{bmatrix} \mbf{0} & \mbf{0} \\ \mbf{R} & \mbf{0} \\ \end{bmatrix} = \mbf{A}\)\\
The condition \emph{i} is met.\vspace{2mm}
\item[(ii)]
Pseudoinverse matrix of the matrix 
\[\mbf{A}^{-} = \begin{bmatrix}
\mbf{0} & \mbf{R}^{- 1} \\
\mbf{0} & \mbf{0} \\
\end{bmatrix}\]
is the matrix
\[\mbf{A} = \begin{bmatrix}
\mbf{0} & \mbf{0} \\
\mbf{R} & \mbf{0} \\
\end{bmatrix},\]
because
\[\mbf{A}^{-}\mbf{A}\mbf{A}^{-}=\begin{bmatrix}
\mbf{0} & \mbf{R}^{- 1} \\
\mbf{0} & \mbf{0} \\
\end{bmatrix}\begin{bmatrix}
\mbf{0} & \mbf{0} \\
\mbf{R} & \mbf{0} \\
\end{bmatrix}\begin{bmatrix}
\mbf{0} & \mbf{R}^{- 1} \\
\mbf{0} & \mbf{0} \\
\end{bmatrix} = \begin{bmatrix}
\mbf{0} & \mbf{R}^{- 1} \\
\mbf{0} & \mbf{0} \\
\end{bmatrix} = \mbf{A}^{-}\]
We have also proved the validity of condition \emph{ii}.
\item[(iii)]
  \(\left( \mbf{A}\mbf{A}^{-} \right)^{T}=\left\lbrack \begin{bmatrix} 
\mbf{0} & \mbf{0} \\ \mbf{R} & \mbf{0} \\ \end{bmatrix}\begin{bmatrix} \mbf{0} & \mbf{R}^{- 1} \\ \mbf{0} & \mbf{0} \\ \end{bmatrix} \right\rbrack^{T} = \begin{bmatrix} \mbf{0} & \mbf{0} \\ \mbf{0} & \mbf{I} \\ \end{bmatrix}^{T} = \begin{bmatrix} \mbf{0} & \mbf{0} \\ \mbf{0} & \mbf{I} \\ \end{bmatrix}\)\\
We have shown that condition \emph{iii} is met.\vspace{2mm}
\item[(iv)]
  \(\left( \mbf{A}^{-}\mbf{A} \right)^{T}=\left( \begin{bmatrix} \mbf{0} & \mbf{R}^{- 1} \\ \mbf{0} & \mbf{0} \\ \end{bmatrix}\begin{bmatrix} \mbf{0} & \mbf{0} \\ \mbf{R} & \mbf{0} \\ \end{bmatrix} \right)^{T} = \begin{bmatrix} \mbf{I} & \mbf{0} \\ \mbf{0} & \mbf{0} \\ \end{bmatrix}^{T} = \begin{bmatrix} \mbf{I} & \mbf{0} \\ \mbf{0} & \mbf{0} \\ \end{bmatrix}\)\\
We have shown that condition \emph{iv} is also met.
\end{itemize}
\end{proof}

\begin{corollary}
For the matrices \(\mbf{A}\) and \(\mbf{A}^{-}\) in Theorem \ref{T33} imply that
\(\mbf{A}\mbf{A}^{-}\) and \(\mbf{A}^{-}\mbf{A}\) are incomplete identity matrices.
\end{corollary}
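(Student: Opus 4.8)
The plan is to simply read off the two products from the block computations already carried out in the proof of Theorem \ref{T33}. When verifying condition (iii) there, we computed
\[
\mbf{A}\mbf{A}^{-}=\begin{bmatrix}\mbf{0}&\mbf{0}\\\mbf{R}&\mbf{0}\end{bmatrix}\begin{bmatrix}\mbf{0}&\mbf{R}^{-1}\\\mbf{0}&\mbf{0}\end{bmatrix}=\begin{bmatrix}\mbf{0}&\mbf{0}\\\mbf{0}&\mbf{I}\end{bmatrix},
\]
and when verifying condition (iv) we computed
\[
\mbf{A}^{-}\mbf{A}=\begin{bmatrix}\mbf{0}&\mbf{R}^{-1}\\\mbf{0}&\mbf{0}\end{bmatrix}\begin{bmatrix}\mbf{0}&\mbf{0}\\\mbf{R}&\mbf{0}\end{bmatrix}=\begin{bmatrix}\mbf{I}&\mbf{0}\\\mbf{0}&\mbf{0}\end{bmatrix},
\]
where in each case \(\mbf{I}\) denotes the \(r\times r\) identity arising from \(\mbf{R}\mbf{R}^{-1}=\mbf{R}^{-1}\mbf{R}=\mbf{I}_r\). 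So the entire content of the corollary is the observation that each of these is a square matrix whose only nonzero entries are a run of \(1\)'s on the main diagonal — an ``incomplete'' identity matrix.

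Concretely, I would first note that \(\mbf{A}\mbf{A}^{-}\) is \(m\times m\) and \(\mbf{A}^{-}\mbf{A}\) is \(n\times n\), so both are square, and that by the displays above each is a diagonal \(\{0,1\}\)-matrix: \(\mbf{A}\mbf{A}^{-}=\mathrm{diag}(0,\dots,0,1,\dots,1)\) with exactly \(r\) ones (in the last \(r\) positions), and \(\mbf{A}^{-}\mbf{A}=\mathrm{diag}(1,\dots,1,0,\dots,0)\) with exactly \(r\) ones (in the first \(r\) positions). Since \(r=\mathrm{rank}(\mbf{A})\le\min(m,n)\), neither can be a full identity unless \(\mbf{A}\) already has full row/column rank, which justifies the word ``incomplete.'' If desired I would add the one-line remark that both matrices are idempotent and symmetric, i.e.\ orthogonal projections, which is immediate from conditions (iii)--(iv) of the Moore--Penrose definition together with \((\mbf{A}\mbf{A}^{-})^2=\mbf{A}(\mbf{A}^{-}\mbf{A}\mbf{A}^{-})=\mbf{A}\mbf{A}^{-}\) and similarly for \(\mbf{A}^{-}\mbf{A}\).

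There is essentially no obstacle here; the only thing to be careful about is bookkeeping of the block sizes, so that the identity block in each product is correctly identified as the \(r\times r\) identity coming from \(\mbf{R}\), and the surrounding zero blocks are given their correct (possibly empty) dimensions consistent with the partition of \(\mbf{A}\). Thus the corollary follows directly from the proof of Theorem \ref{T33} with no additional argument needed.
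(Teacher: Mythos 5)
Your proposal is correct and takes essentially the same route as the paper: the corollary is stated there without separate proof precisely because the products \(\mbf{A}\mbf{A}^{-}=\begin{bmatrix}\mbf{0}&\mbf{0}\\\mbf{0}&\mbf{I}\end{bmatrix}\) and \(\mbf{A}^{-}\mbf{A}=\begin{bmatrix}\mbf{I}&\mbf{0}\\\mbf{0}&\mbf{0}\end{bmatrix}\) are already computed in parts (iii) and (iv) of the proof of Theorem \ref{T33}, which is exactly what you read off (your bookkeeping of the block sizes and the count of \(r\) ones is also consistent with the paper).
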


\begin{theorem}\label{SLR} Let
\begin{equation}\label{slr}
\mbf{A}\mbf{x}=\mbf{b}
\end{equation}
be a solvable system of linear equations and \(\mbf{A}^{-}\) be a pseudoinverse of the matrix  \(\mbf{A},\) then
\begin{equation}\label{rslr}
\mbf{x} = \mbf{A}^{-}\mbf{b}
\end{equation}
is a solution of \eqref{slr}.
\end{theorem}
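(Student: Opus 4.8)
The plan is to exploit the defining property of the pseudoinverse, namely $\mathbf{A}\mathbf{A}^{-}\mathbf{A} = \mathbf{A}$, together with the hypothesis that \eqref{slr} is solvable. The only thing ``solvable'' buys us is the existence of at least one genuine solution, so the first step is to name it: let $\mathbf{x}_{0}$ be any vector with $\mathbf{A}\mathbf{x}_{0} = \mathbf{b}$. The candidate solution proposed in \eqref{rslr} is $\mathbf{A}^{-}\mathbf{b}$, so it suffices to check that applying $\mathbf{A}$ to it returns $\mathbf{b}$.

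The computation is a one-line substitution. I would write
\[
\mbf{A}\left(\mbf{A}^{-}\mbf{b}\right) = \mbf{A}\mbf{A}^{-}\left(\mbf{A}\mbf{x}_{0}\right) = \left(\mbf{A}\mbf{A}^{-}\mbf{A}\right)\mbf{x}_{0} = \mbf{A}\mbf{x}_{0} = \mbf{b},
\]
where the first equality uses $\mbf{b} = \mbf{A}\mbf{x}_{0}$, the second is associativity of matrix multiplication, the third is the pseudoinverse identity $\mbf{A}\mbf{A}^{-}\mbf{A} = \mbf{A}$ from the definition, and the last again uses $\mbf{A}\mbf{x}_{0} = \mbf{b}$. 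Hence $\mbf{x} = \mbf{A}^{-}\mbf{b}$ satisfies $\mbf{A}\mbf{x} = \mbf{b}$, which is exactly the assertion.

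There is essentially no obstacle here; the subtlety, if any, is purely conceptual rather than technical. One must be careful not to overclaim: the theorem only says $\mbf{A}^{-}\mbf{b}$ is \emph{a} solution, not the unique one and not a distinguished one (the particular solution obtained depends on which pseudoinverse $\mbf{A}^{-}$ was chosen, and different choices generally give different solutions). It is also worth noting explicitly that the solvability hypothesis is genuinely needed: if $\mbf{b}$ is not in the column space of $\mbf{A}$, then $\mbf{A}\mbf{A}^{-}\mbf{b}$ need not equal $\mbf{b}$, since the identity $\mbf{A}\mbf{A}^{-}\mbf{A} = \mbf{A}$ only controls the behaviour of $\mbf{A}\mbf{A}^{-}$ on the range of $\mbf{A}$. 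I would keep the written proof to the displayed chain of equalities preceded by the sentence introducing $\mbf{x}_{0}$, and optionally append the remark about non-uniqueness.
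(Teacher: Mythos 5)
Your proof is correct and is essentially identical to the paper's: both introduce a witness solution ($\mathbf{x}_0$, called $\mathbf{y}$ in the paper) from the solvability hypothesis and verify $\mathbf{A}(\mathbf{A}^{-}\mathbf{b}) = (\mathbf{A}\mathbf{A}^{-}\mathbf{A})\mathbf{x}_0 = \mathbf{A}\mathbf{x}_0 = \mathbf{b}$ using the defining identity $\mathbf{A}\mathbf{A}^{-}\mathbf{A} = \mathbf{A}$. Your added remarks on non-uniqueness and the necessity of solvability are sensible but not part of the paper's argument.
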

\begin{proof} Assuming that the system of linear equations \eqref{slr} is
solvable, then a vector \(\mbf{y}\)  exists, such that
\[\mbf{A}\mbf{y}=\mbf{b}\]
We show that the vector in \eqref{rslr}  is the solution of  the system of linear equations \eqref{slr}.
It is valid that
\[\mbf{A}\mbf{x}=\mbf{A}\left(\mbf{A}^{-}\mbf{b}\right)=
\left(\mbf{A}\mbf{A}^{-}\mbf{A}\right)\mbf{y}=\mbf{A}\mbf{y}=\mbf{b}\]
\end{proof}

\begin{remark}\label{oMP}
If the system of linear equations \eqref{slr} is solvable and its solution is expressed in the form
\(\mbf{x} = \mbf{A}^{-}\mbf{b}\) and \(\mbf{A}^{-}\) is Moore-Penrose pseudoinverse of the matrix \(\mbf{A}\), then this solution minimizes the Euclidean norm \(\left\| \mbf{Ax} - \mbf{b} \right\|\) and of all \(n\)-dimensional solutions
 \(\mbf{x}\) that minimize this norm it has the lowest norm. Such a vector solution of the system of linear equations is called the \emph{solution of the system of linear equations with the least squares solution of minimum norm} or \emph{the best approximate solution of the system}. 
\end{remark}
Note that in our considerations, when solving a solvable system of linear equations \eqref{slr}  we will consider a solution \eqref{rslr} expressed in the form
\[\mbf {x} = \mbf {A }^+ \mbf {b}\]
The solution of  \eqref{slr} exists if and only if $\mbf{A}\mbf{A}^{+}\mbf{b}=\mbf{b}$,\cite{James}.

\section{Pseudoinverse matrix differential operator} 

We shall introduce some properties of the differential and inverse differential operators.

The definition of the inverse differential operator \cite{S}: Let $\dfrac1{\phi(D)}f(x)$ be defined as a particular solution $y$, such that  \eqref{DRDp} then  $\dfrac1{\phi(D)}f(x)$ is called  \emph{the inverse differential operator}. \qed
\par We will prove relation I in the next Table 2. \\*

\proof 
 With respect to C in the Table 1 is
\begin{equation}\label{aad}
\phi(D)(xf(x)) = x\phi(D)f(x) + \phi^\prime(D)f(x)
\end{equation}
Let
\[\phi(D)f(x) = R(x)\]
Then 
\[\frac{1}{\phi(D)}\phi(D)f(x) = \frac{1}{\phi(D)}R(x)\]
\begin{equation}\label{aae}
f(x) = \frac{1}{\phi(D)}R(x)
\end{equation}
Substituting \eqref{aae} into relationship \eqref{aad}, we get 
\[\aligned
\phi(D)\left( x\frac{1}{\phi(D)}R(x) \right) &= x\phi(D)\frac{1}{\phi(D)}R(x) + \phi^\prime(D)\frac{1}{\phi(D)}R(x)\\
\phi(D)\left( x\frac{1}{\phi(D)}R(x) \right) &= x R(x) + \phi^\prime(D)\frac{1}{\phi(D)}R(x)\\
\endaligned\]
Let's multiply the previous equation from the left side with \(\dfrac{1}{\phi(D)}\) , then
\[x\frac{1}{\phi(D)}R(x) = \frac{1}{\phi(D)} x R(x) + \frac{1}{\phi(D)}\phi^\prime(D)
\frac{1}{\phi(D)}R(x)\]
From which
\[\frac{1}{\phi(D)} x R(x) = x\frac{1}{\phi(D)}R(x) - \frac{1}{\phi(D)}\phi^\prime(D)
\frac{1}{\phi(D)}R(x)\]
\qed
\pagebreak

\begin{tabular}{l}
Table 2: Inverse Operator Techniques \cite{Ch}, \cite{S}\\
\end{tabular}\\
\begin{tabular}{|l|l|}
\hline
\textbf{A.} & \\
\qquad $\frac1{\phi(D)}\sum\limits_{k=0}^nc_kR_k(x)$& 
$\sum\limits_{k=0}^nc_k\frac1{\phi(D)}R_k(x)$\\
\hline
\textbf{B.} & \\
 \(\qquad\frac1{D-m}R(x)\)	& \(e^{mx}\int e^{-mx}R(x)dx\) \\
 & \\
\hline
\textbf{C.}  & \\
&  $e^{m_1x}\int e^{-m_1x}\,e^{m_2x}\int e^{-m_2x}\dots$\\
\quad\( \frac1{(D-m_1)(D-m_2)\dots (D-m_n) }\)& $e^{m_nx}\int e^{-m_nx}R(x)dx^n$\\
& {\small This can also be evaluated by expanding} \\
& {\small the inverse operator into partial fractions}\\
& {\small and then using B.} \\
\hline
\textbf{D.}  & \\
&$\frac{e^{px}}{\phi(p)}$\, {\small if} \, $\phi(p)\ne 0$\\
 \hspace{1cm}$\frac1{\phi(D)}e^{px}$ & $\frac{x^ke^{px}}{\phi^{(k)}(p)}$\,{\small if}\,  
$\textstyle{\phi(p)=\phi^\prime(p)=\dots \phi^{(k-1)}(p)=0}$\\
&\hspace{3.9cm} {\small but} \, $\phi^{(k)}(p)\ne 0$\\
\hline
\textbf{E.}&\\
\qquad$\frac1{\phi(D^2)}\cos px$&\qquad $\dfrac{\cos px}{\phi(-p^2)}$\\
&\hspace{3cm}{\small if}\quad $\phi(-p^2)\ne 0$\\
\qquad$\frac1{\phi(D^2)}\sin px$&\qquad $\dfrac{\sin px}{\phi(-p^2)}$\\
\hline 
\textbf{F.}&\\
\qquad $\frac1{\phi(D)}\cos px$
      & \qquad $Re\left\{\frac{e^{ipx}}{\phi(ip)}\right\}$\\
$\qquad \frac1{\phi(D)}\sin px$
 & \qquad $Im\left\{\frac{e^{ipx}}{\phi(ip)}\right\}$\\
& {\small If $\phi(ip)\ne 0$, otherwise use D.}\\
\hline
\textbf{G.}&\\
\qquad $\frac1{\phi(D)}x^p$ & 
\qquad $\left(\sum\limits_{k=0}^p c_kD^k\right)x^p$\\
{\small by expanding $\frac1{\phi(D)}$ in powers of $D$}
& {\small since $D^{p+n}x^p=0$ for $n>0.$}\\
\hline
\textbf{H.}&\\
\qquad $\frac1{\phi(D)}e^{px}R(x) $&\quad $e^{px}\frac1{\phi(D+p)}R(x)$\\
& {\small  called the "operator shift theorem"}.\\
\hline
\textbf{I.}&\\
\qquad $\frac1{\phi(D)}xR(x)$ & 
$x\frac1{\phi(D)}R(x) - \frac1{\phi(D)}\phi^\prime(D)\frac1{\phi(D)}R(x)$=\\
&$\left(x-\frac1{\phi(D)}\phi^\prime(D)\right)\frac1{\phi(D)}R(x)$\\
&\\
\hline
\textbf{J.}&{\small using substitute}\\
\quad$\dfrac1{\phi(D)}\left( e^{\alpha x} P_{m}(x)\cos{\beta x} +\right.$ 
& $\cos{\beta x} = \dfrac{e^{i\beta x} + e^{- i\beta x}}{2}$\\
\qquad\quad$\left. Q_{n}(x)\sin{\beta x} \right)$&
$\sin{\beta x} = \dfrac{e^{i\beta x} - e^{- i\beta x}}{2i}$ \\
&{\small and next to use an operator shift theorem}\\
\hline
\end{tabular}
\\

Proofs of many of the statements  in Table 2 can be found for example in \cite{Ch} or \cite{S}.

Note that all analytical relations derived for the differential operator have an adequate expression for the matrix differential operator.\qed

In order to define a pseudoinverse matrix differential operator, we first consider a simple form of the differential equation \eqref{DR} i. e.
 \begin{equation}\label{sdr}
y^\prime=f(x)
\end{equation} \par

Let us assume that the function \(f(x)\) is differentiable and every derivative of the function \(f(x)\)  can be expressed as a finite linear combination of linear independent functions $f_1(x),f_2(x),\dots,f_n(x).$
Let's consider the vector space 
\[V=\text{span}\left(f_1(x),f_2(x),\dots,f_n(x)\right)\]
with the basis \(B=\{f_1(x),f_2(x),\dots,f_n(x)\}\). 

\begin{itemize}
\item[(\textbf{A})]
Let us assume that the particular solution of the differential equation \eqref{sdr} belongs to the vector space \(V\) with the basis $B$. Let  \(\mathcal{D}_B\)  be a matrix differential operator  corresponding to the basis $B$. Then the equation 
\[ \mathcal{D}_B\mbf{y}_B=\mbf{f}_B,\]
where $\mbf{f}_B=[f(x)]_B$, is solvable. And due to the Theorem \ref{SLR} 
\begin{equation}\label{byMPP}
\mbf{y}_B=\mathcal{D}_B^{+}\mbf{f}_B
\end{equation} 
is the solution of the differential equation \eqref{sdr} expressed in the basis  $B$ where 
$\mathcal{D}_B^{+}$ is the Moore-Penrose pseudoinverse of the matrix  
$\mathcal{D}_B.$\\*
Thus $\mathcal{D}_B^{+}$  is the pseudoinverse matrix differential operator to the operator  $\mathcal{D}_B$ .
(The unique solution \eqref{byMPP} to the differential equation \eqref{sdr}  do not contain a  kernel of matrix differential operator $\mathcal{D}_B^{+}$. This also applies in the following cases.)

\item[(\textbf{B})]
Let us assume that the particular solution of the differential equation \eqref{sdr} does not belong to the vector space \(V\) and let  \(\mathcal{D}_B\)  be a matrix differential operator with the considered basis B. Then the equation 
\[ \mathcal{D}_B\mbf{y}_B=\mbf{f}_B\]
is not solvable. Let's create a new system of functions 
\[\aligned
B_1&=\{f_1(x),f_2(x),\dots,f_n(x)\}\cup\{xf_1(x),xf_2(x),\dots,xf_n(x)\}\\
&=\{f_{11}(x),f_{12}(x),\dots,f_{1m}(x)\}\\
\endaligned \]
where $m>n$.
Let functions $f_{11}(x),f_{12}(x),\dots,f_{1m}(x)$ be a linear independent and let 
\[V_1=\text{span}(f_{11}(x),f_{12}(x),\dots,f_{1m}(x))\] 
with the basis $B_1$. 
Let a particular solution of the differential equation \eqref{sdr} belong to the vector space  \(V_1\) and let  \(\mathcal{D}_{B_1}\)  be a matrix differential operator with the considered basis $B_1$. Then the equation 
\begin{equation}\label{ssdr}
\mathcal{D}_{B_1}\mbf{y}_{B_1}=\mbf{f}_{B_1}
\end{equation}
is solvable, where $\mbf{f}_{B_1}=[f(x)]_{B_1}$. 
Due to Theorem \ref{SLR} 
\[\mbf{y}_{B_1}=\mathcal{D}_{B_1}^{+}\mbf{f}_{B_1}\] 
is the solution of the differential equation \eqref{sdr} expressed in the basis  $B_1$
where $\mathcal{D}_{B_1}^{+}$ is the Moore-Penrose pseudoinverse of the matrix  
$\mathcal{D}_{B_1}.$  Thus $\mathcal{D}_B^{+}$  is the pseudoinverse matrix differential operator to the operator  $\mathcal{D}_B$ . 
\item[(\textbf{C})] If a solution of the differential equation \eqref{sdr} does not belong to the vector space  \(V_1\), we will create a new system of vectors in a manner similar to $B_1$ and analyse the solvability of the differential equation. 
It is difficult to prove in general that the solution lies in some vector space, the construction of which we have described. It will be explained in the following examples.
\end{itemize}

For completeness, we still have to consider a differential equation in the form \eqref{DR}.  Also in this case, we assume that the function \(f(x)\) is differentiable and every derivative of the function \(f(x)\)  can be expressed as a finite linear combination of linear independent functions $f_1(x),f_2(x),\dots,f_n(x).$
We are considering a vector space 
\[V=\text{span}(f_1(x),f_2(x),\dots,f_n(x))\]
with the basis \(B=\{f_1(x),f_2(x),\dots,f_n(x)\}\) 
and and we discuss the existence of a solution to the equation
\begin{equation}\label{MDRD}
\left(a_{n}\mathcal{D}_B^{n} + a_{n - 1}\mathcal{D}_B^{n - 1} + \cdots + a_{1}\mathcal{D}_B + a_{0}\mbf{I}\right)\mbf{y}_B = \mbf{f}_B
\end{equation}
or briefly 
\[\phi(\mathcal{D}_B)\mbf{y}_B =\mbf{f}_B,\]
where \(\mbf{I}\) is the identity matrix. 
The discussion is analogous to previous cases. \qed\vspace{2mm}
 
Now let us present some elementary examples. We will focus this issue in more detail below. Let's start with a very elementary example.
\begin{example}
Determine using a matrix differential operator the particular solution of the equation
\begin{equation}\label{xx1}
y^{\prime}=x.
\end{equation}
\end{example} 
\noindent
\emph{Solution.} $f(x)=x$ and each derivative of $f(x)$ can be expressed as a linear combination of 
$\{x, 1\}$. Let $V=\text{span}(x,1)$. Then 
\[\mathcal{D}_B=\begin{bmatrix}
{0} & {0} \\
{1} & {0} \\
\end{bmatrix}.\]
The equation
\[\mathcal{D}_B\mbf{y}_B=[x]_B \]
\[
\begin{bmatrix}
{0} & {0} \\
{1} & {0} \\
\end{bmatrix}
\begin{bmatrix}
{y}_1 \\
{y}_2 \\
\end{bmatrix}=
\begin{bmatrix}
1 \\
0 \\
\end{bmatrix}\]
has no solution. It follows that a particular solution of \eqref{xx1} does not belong to $V$. Let's create a new system of function $B_1=\{x^2,x\}\cup\{x,1\}=\{x^2,x,1\}$ which  is linear independent. 
Then
 $$V_1=\text{span}(x^2,x,1),\quad \mathcal{D}_{B_1}=
\begin{bmatrix}
0 & 0 & 0\\
2 & 0 & 0 \\
0 & 1 & 0.\\
\end{bmatrix}
$$
The matrix equation
\[\mathcal{D}_{B_1}\mbf{y}_{B_1}=[x]_{B_1} \]
$$\left[\begin{array}{cc|c}
0 & 0 & 0\\ \hline
2 & 0 & 0 \\
0 & 1 & 0\\
\end{array}\right]\begin{bmatrix}
y_1 \\
y_2\\
y_3\\
\end{bmatrix}=\begin{bmatrix}
0 \\ 
1\\
0\\
\end{bmatrix}$$
has a solution and (Theorem \ref{SLR})
$$\mbf{y}_{B_1}=\begin{bmatrix}
y_1 \\
y_2\\
y_3\\
\end{bmatrix}=\mathcal{D}_{B_1}^{+}\begin{bmatrix}
0 \\
1\\
0\\
\end{bmatrix}$$
According to Theorem \ref{T33} 
$$\mathcal{D}_{B_1}^{+}=\left[\begin{array}{c|cc}
0 & \dfrac12 & 0\\
0 & 0 & 1 \\ \hline
0 & 0 & 0\\
\end{array}\right]$$
Thus
$$\mbf{y}_{B_1}=
\begin{bmatrix}
0 & \dfrac12 & 0\\
0 & 0 & 1 \\
0 & 0 & 0\\
\end{bmatrix}\begin{bmatrix}
0 \\
1\\
0\\
\end{bmatrix}=\begin{bmatrix}
\dfrac12 \\
0\\
0\\
\end{bmatrix}
$$
which, in turn, implies the particular solution
$$y=\frac12x^2$$

\begin{example}\label{inbase}
Determine the particular solutions of the equations
\begin{itemize}
\item[(a)] \quad $y^{\prime\prime}+3y^\prime-4y=x e^{2x}$
\item[(b)] \quad $y^{\prime\prime}+3y^\prime-4y=2x e^{2x}-3 e^{2x}$
\end{itemize}
\end{example} 
\noindent

\noindent\textit{Solution.} Every derivative of the functions on the right-hand side of the equations is a linear combination of the functions $x\cdot e^{2x}, e^{2x}$ which are linear independent  Let's assume that a particular solution of the differential equations belongs to the vector space
$V=\textrm{span}(x e^{2x}, e^{2x})$ with the base $B=\{x e^{2x}, e^{2x}\}$.  Differential operator  
\[\mathcal{D}_B=\begin{bmatrix}
{2} & {0} \\
{1} & {2} \\
\end{bmatrix}\]
\begin{itemize}
\item[(a)]
\[\left(\mathcal{D}_B^2+3\mathcal{D}_B-4\mbf{I}_2\right)
\mbf{y}_B=[x e^{2x}]_B\]
\[\left({\begin{bmatrix}
{2} & {0} \\
{1} & {2} \\
\end{bmatrix}}^2+3\begin{bmatrix}
{2} & {0} \\
{1} & {2} \\
\end{bmatrix}-4\begin{bmatrix}
{1} & {0} \\
{0} & {1} \\
\end{bmatrix}
\right)\mbf{y}_B=\begin{bmatrix}
{1} \\
 {0} \\
\end{bmatrix}\]
\[\begin{bmatrix}
{6} & {0} \\
{5} & {-4} \\
\end{bmatrix}\mbf{y}_B=\begin{bmatrix}
{1} \\
 {0} \\
\end{bmatrix}\]
\[\mbf{y}_B=\begin{bmatrix}
{6} & {0} \\
{5} & {-4} \\
\end{bmatrix}^{-1}\begin{bmatrix}
{1} \\
 {0} \\
\end{bmatrix}
\renewcommand{\arraystretch}{2}
=\begin{bmatrix}
{\dfrac16} & {0} \\
\dfrac{5}{24} & -\dfrac{1}{4} \\
\end{bmatrix}\begin{bmatrix}
{1} \\
 {0} \\
\end{bmatrix}=
\begin{bmatrix}
{\dfrac16} \\
 {\dfrac5{24}} \\
\end{bmatrix}\]
\vspace{2mm}
We have found the particular solution of differential equation in (a)
\[y=\frac16xe^{2x}+\frac5{24}e^{2x}.\]
\item[(b)]
\[\mbf{y}_B
\renewcommand{\arraystretch}{2}
=\begin{bmatrix}
{\dfrac16} & {0} \\
\dfrac{5}{24} & -\dfrac{1}{4} \\
\end{bmatrix}\begin{bmatrix}
{2} \\
 {-3} \\
\end{bmatrix}=
\begin{bmatrix}
{\dfrac13} \\
 {\dfrac76} \\
\end{bmatrix}\]
\end{itemize}
\[y=\frac13xe^{2x}+\frac76e^{2x}\]

\begin{example}\label{2times}
Find the particular solution of the differential equation
\begin{equation}\label{p2times}
y^{IV}+2y^{\prime\prime}+y=2\sin x-4\cos x
\end{equation}
\end{example}
\noindent
\textit{Solution.} It's easy to prove that the particular solution $y$ of differential equation \eqref{p2times}
\[\aligned
y\notin V&=\text{span}(\sin x, \cos x)\\
y\notin V&=\text{span}(x\sin x, x\cos x, \sin x, \cos x)\\
\endaligned \]
Let's assume that $y\in V=\text{span}(x^2\sin x, x^2\cos x, x\sin x, x\cos x, \sin x, \cos x)$. Since the set of functions
\[B=\{x^2\sin x, x^2\cos x, x\sin x, x\cos x, \sin x, \cos x\}\]
is linearly independent, it is a basis for $V$. The corresponding matrix differential operator 
\[\mathcal{D}_B=\begin{bmatrix}
0 &-1 & 0 & 0 & 0 & 0 \\
1 & 0 & 0 & 0 & 0 & 0\\
2 & 0 & 0 &-1 & 0 & 0 \\
0 & 2 & 1 & 0 & 0 & 0 \\
0 & 0 & 1 & 0 & 0 &-1\\
0 & 0 & 0 & 1 & 1& 0 \\
\end{bmatrix}\]
Then 
\[\aligned  
 \left(\mathcal{D}_B^4+2\cdot\mathcal{D}_B^2+\mbf{I}_6\right)\mbf{y}_B&=
[2\sin x-4\cos x]_B\\
\begin{bmatrix}
0 & 0 & 0 & 0 & 0 & 0\\
0 & 0 & 0 & 0 & 0 & 0\\
0 & 0 & 0 & 0 & 0 & 0\\
0 & 0 & 0 & 0 & 0 & 0\\
-8 & 0 & 0 & 0 & 0 & 0\\
0 & -8 & 0 & 0 & 0 & 0\\
\end{bmatrix}\mbf{y}_B&=\begin{bmatrix}
0\\
0\\
0\\
0\\
2\\
-4\\
\end{bmatrix}\\
\mbf{y}_B=\left[\begin{array}{cc|cccc}
0 & 0 & 0 & 0 & 0 & 0\\
0 & 0 & 0 & 0 & 0 & 0\\ 
0 & 0 & 0 & 0 & 0 & 0\\
0 & 0 & 0 & 0 & 0 & 0\\ \hline
-8 & 0 & 0 & 0 & 0 & 0\\
0 & -8 & 0 & 0 & 0 & 0\\
\end{array}\right]^{+}&\begin{bmatrix}
0\\
0\\
0\\
0\\
2\\
-4\\
\end{bmatrix}\\
\mbf{y}_B=\left[\begin{array}{cccc|cc}
0 & 0 & 0 & 0 & -\frac{1}{8} & 0\\
0 & 0 & 0 & 0 & 0 & -\frac{1}{8}\\ \hline
0 & 0 & 0 & 0 & 0 & 0\\
0 & 0 & 0 & 0 & 0 & 0\\
0 & 0 & 0 & 0 & 0 & 0\\
0 & 0 & 0 & 0 & 0 & 0\\
\end{array}\right]
\begin{bmatrix}
0\\
0\\
0\\
0\\
2\\
-4\\
\end{bmatrix}
&=\begin{bmatrix}
-\frac{1}{4}\\
\frac{1}{2}\\
0\\
0\\
0\\
0\end{bmatrix}
\endaligned \]
The Moore-Penrose pseudoinverse to the matrix $\left(\mathcal{D}_B^4+2\cdot\mathcal{D}_B^2+\mbf{I}_6\right)$ we found using  Theorem \ref{T33}. Then
\[y=-\frac14x^2\sin x+\frac12x^2\cos x\]

\section{Matrix differential operator and the method of undetermined coeficients}

In order to describe the algorithm for the finding of a particular solution of
an ordinary nonhomogeneous linear differential equation with constant
coefficients \eqref{DRD} using matrix differential operator, we use knowledge of the algorithm for finding a particular solution by an
undetermined coefficients method.

Let us consider a differential equation \eqref{DRD}  with a characteristic
equation
\begin{equation}\label{chro}
a_{n}k^{n} + a_{n - 1}k^{n - 1} + \ldots + a_{1}k + a_{0} = 0
\end{equation}
First, we make two conventions to simplify expressing the root
multiplicity of the characteristic equation and the value of the
function \(f(x) = x^{0}\) at the discontinuity point.
\begin{itemize}
\item[1.] If the number \(\alpha\) is not the root of the characteristic
equation \eqref{chro}, we will also say that it is a 0-fold root of the characteristic equation;\newline if the number \(\alpha\)  is a simple characteristic root of \eqref{chro}, we will also say that it is 1-fold root of the characteristic equation, and so on.
\item[2.] Given the removable discontinuity of the function
\(f(x) = x^{0}\) at the point $x = 0$, we define the function at this point as \(f(0) = 1.\)
\end{itemize}

Let us consider two cases of the right-hand side of the differential equation
\eqref{DRD}. We will describe the algorithm for finding a particular solution
using a matrix differential operator:
\begin{itemize}  
\item[a)] with the right-hand side \eqref{DRD}
\begin{equation}\label{epx}
f(x) = e^{\alpha x}P_{m}(x)
\end{equation}
where \(\alpha\in \mathbb{R}\) and \(P_{m}(x)\) is a polynomial of degree \(m\). The algorithm to determine the particular solution of that differential equation, by the method of undetermined coefficients, says:\newline
If $\alpha$ is the \(k\)-fold root ($k = 0,1, 2, \ldots,$) of the characteristic equation \eqref{chro} of the differential equation \eqref{DRD} with the right-hand side \eqref{epx}, then the particular solution of this equation is of the form
\begin{equation}\label{repx}
y = x^{k}e^{\alpha x}Q_{m}(x),
\end{equation}
where
\(Q_{m}(x) = A_{m}x^{m} + A_{m - 1}x^{m - 1} + \ldots + A_{1}x + A_{0}\) 
is a polynomial of degree \(m\) with undetermined coefficients.
The values of these undetermined coefficients can be determined by substituting \eqref{repx} for \(y\) into the equation \eqref{DRD} and then comparing the coefficients for the same functions on the right-hand and left-hand sides of the equation. The algorithm for determining the particular solution of the differential equation \eqref{DRD} with the right-hand side \eqref{repx} by the method of
undetermined coefficients implies that this particular solution will be
in the vector space
\[V = \text{span}( x^{k + m}e^{\alpha x},x^{k + m - 1}e^{\alpha x},\ldots,
 xe^{\alpha x},e^{\alpha x})\]
with the basis for \(V\)
\begin{equation}\label{bepq}
B = \left\{ x^{k + m}e^{\alpha x},x^{k + m - 1}e^{\alpha x},\ldots,
 xe^{\alpha x},e^{\alpha x} \right\}
\end{equation}
The relevant matrix differential operator \(\mathcal{D}_B\) in the vector
space \(V\) with the basis \(B\) be a matrix of the type
\(( k + m + 1 ) \times ( k + m + 1 ).\)
\item[b)]  with the right-hand side
\begin{equation}\label{espx}
f(x) = e^{\alpha x}\left(P_{r}(x)\sin{\beta x} + Q_{s}(x)\cos{\beta x} \right),
\end{equation}
where \(\alpha,\ \beta\) are real numbers, \(P_{r}(x)\) is a polynomial of degree \(r\), 
\(Q_{s}(x)\) is a polynomial of degree \(s\).\newline
If \(\alpha + \beta i \) is a \(k\)-fold root ($k = 0,1, 2, \ldots,$) of
the characteristic equation \eqref{chro} of the differential equation \eqref{DRD} with
the right-hand side \eqref{espx}, then the particular solution of this equation has
the form
\begin{equation}\label{respx}
y = x^{k}e^{\alpha x}\left( U_{m}(x)\sin{\beta x} + V_{m}(x)\cos{\beta x}\right) 
\end{equation}
where
\(m = \max{\{ r,s\}}\),
\(U_{m}(x) = A_{m}x^{m} + A_{m - 1}x^{m - 1} + \ldots + A_{1}x + A_{0},\), 
\(V_{m}(x) = B_{m}x^{m} + B_{m - 1\ }x^{m - 1} + \ldots + B_{1}x + B_{0}\)

are polynomials of degree \(m\) with undetermined coefficients. The values of these undetermined coefficients can be determined by substituting \eqref{respx}  for \(y\) into the equation \eqref{DRD} and then comparing the coefficients for the same functions on the right-hand and left-hand sides of the equation. The algorithm for determining the particular solution of the differential equation \eqref{DRD} with the right-hand side \eqref{respx} by the method of undetermined coefficients implies that this particular solution will be in the vector space
\[\begin{array}{l}
V = \text{span}( x^{k + m}e^{\alpha x}\sin{\beta x},x^{k + m}e^{\alpha x}\cos{\beta x},x^{k + m - 1}e^{\alpha x}\sin{\beta x},\\ x^{k + m - 1}e^{\alpha x}\cos{\beta x}\ldots,
 xe^{\alpha x}\sin{\beta x}, xe^{\alpha x}\cos{\beta x},e^{\alpha x}\sin{\beta x}, \\
 e^{\alpha x}\cos{\beta x}) 
\end{array}\]
with the basis for \(V\)
\begin{equation}\label{brespx}
\begin{array}{l}
B = \left\{ x^{k + m}e^{\alpha x}\sin{\beta x},x^{k + m}e^{\alpha x}\cos{\beta x},x^{k + m - 1}e^{\alpha x}\sin{\beta x},\right.\\ x^{k + m - 1}e^{\alpha x}\cos{\beta x}\ldots, 
 xe^{\alpha x}\sin{\beta x}, xe^{\alpha x}\cos{\beta x},e^{\alpha x}\sin{\beta x},\\
 \left.e^{\alpha x}\cos{\beta x}\right\} 
\end{array}
\end{equation}
The relevant matrix differential operator \(\mathcal{D}_B\) in the vector space \(V\) with the basis \(B\) be a matrix of the type \( 2(k + m + 1) \times 2(k + m + 1) .\)
\end{itemize}

Subsequently, in both cases a) and b), we create a matrix equation
\begin{equation}
\left( a_{n}\mathcal{D}_B^{n} + a_{n - 1}\mathcal{D}_B^{n - 1} + \cdots + a_{1}\mathcal{D}_B + a_{0}\mbf{I} \right)\mbf{y}_B = \mbf{f}_B
\end{equation}
where \(\mbf{f}_B=[f(x)]_B\) and $\mbf{y}_B=[y(x)]_B$, where $y(x)$ is a particular solution of the differential equation \eqref{DRD}. 
\vspace{2mm}

If the right-side of the differential equation \eqref{DR} is the sum of several functions, then the principle of superposition can be used to solve it. \emph{The principle of superposition of solutions} says that if $y_i$ $(i=1,2,\dots m)$ is a solution of the differential equation 
$$\left(a_{n}y^{(n)} + a_{n - 1}y^{(n - 1)} + \cdots + a_{1}y^{\prime} + a_{0}\right)y = f_i(x)$$
$(i=1,2, \dots m)$, then for any constants $k_1,k_2,\dots,k_m$, the function
$$y=k_1y_1+k_2y_2+\cdots+k_my_m$$ is a solution to the differential equation \eqref{DR} with 
$$f(x)=k_1f_1(x)+k_2f_2(x)+\cdots+k_mf_m$$

In special case, if $f_1(x),f_2(x),\dots,f_m(x)$ form the basis $B$ of the vector space $V=\text{span}(f_1(x), f_2(x),\dots,f_m(x))$ then it is enough to solve only the equation
$$\left( a_{n}\mathcal{D}_B^{n} + a_{n - 1}\mathcal{D}_B^{n - 1} + \cdots + a_{1}\mathcal{D}_B + a_{0}\mbf{I} \right)\mbf{y}_B = \begin{bmatrix}
k_1\\
k_2\\
\vdots\\
k_m
\end{bmatrix}_B$$
\vspace{2mm}

In the next example, we want to show how the matrix differential operator can be used to support the finding of a particular solution of the differential equation by the method of undetermined coefficients.

\begin{example}\label{aaf} Determine the particular solution of the differential equation
\begin{equation}\label{raaf}
(D-2)^2(D+4)^2y=3e^{2x}
\end{equation}
\end{example}

\noindent\textit{Solution.} First we solve the equation
\begin{equation}\label{aag}
(D+4)^2y=3e^{2x}
\end{equation}
Since $\alpha=2$ is not a solution of the characteristic equation $(k+4)^2=0$, it follows that the solution of \eqref{aag} belongs to the vector space $V=\text{span}(e^{2x})$ with the basis  $B_1=\{e^{2x}\}$. Then the matrix differential operator is 
\[\mathcal{D}_{B_1}=[2]\]
and
\[\aligned
(\mathcal{D}_{B_1}+4\mbf{I}_1)^2\mbf{{y}_1}_{B_1}&=[3e^{2x}]_{B_1}\\
([2]+4[1])^2\mbf{{y}_1}_{B_1}&=[3]\\
[6]^2\mbf{{y}_1}_{B_1}&=[3]\\
\mbf{{y}_1}_{B_1}&=[6]^{-2}[3]\\
\mbf{{y}_1}_{B_1}&=\left[\frac1{12}\right]\\
y_1&=\frac1{12}e^{2x}\\
\endaligned\]
Now we have to determine the particular solution of the differential equation  
\begin{equation}\label{aah}
(D-2)^2y=\frac1{12}e^{2x}
\end{equation}
Because $\alpha=2$ is 2-fold root of the characteristic equation $(k-2)^2=0$, it follows that the solution of \eqref{aah} belongs to the vector space\linebreak
$V=\text{span}(x^2e^{2x}, xe^{2x}, e^{2x})$ with the basis 
$B=\{x^2e^{2x}, xe^{2x}, e^{2x}\}$. 
Then the matrix differential operator is 
$$B=\begin{bmatrix}
2&0&0\\
2&2&0\\
0&1&2\\
\end{bmatrix} $$
and
$$\aligned
(\mathcal{D}_B-2\mbf{I}_3)^2\mbf{y}_B&=\left[\frac1{12}e^{2x}\right]_B\\
\left[\begin{array}{c|cc}
0&0&0\\
0&0&0\\\hline
2&0&0\\
\end{array}
\right]\mbf{y}_B&=\begin{bmatrix}
0\\
0\\
\dfrac1{12}\\
\end{bmatrix}\\
\mbf{y}_B&=
\left[\begin{array}{c|cc}
0&0&0\\
0&0&0\\\hline
2&0&0\\
\end{array}
\right]^+\begin{bmatrix}
0\\
0\\
\dfrac1{12}\\
\end{bmatrix}\\
\endaligned$$
\[\mbf{y}_B=
\left[\begin{array}{cc|c}
0&0&\dfrac12\\\hline
0&0&0\\
0&0&0\\
\end{array}
\right]\begin{bmatrix}
0\\
0\\
\dfrac1{12}\\
\end{bmatrix}=\begin{bmatrix}
\dfrac1{24}\\
0\\
0\\
\end{bmatrix}\]
The particular solution of the differential equation \eqref{raaf} is 
\[y=\frac1{24}x^2e^{2x}\]

\begin{example} Determine the particular solution of the differential equation
\begin{equation}\label{aab}
y^{\prime\prime} - 4y^\prime + 13y = 2xe^{2x}\cos{3x}
\end{equation}
using a pseudoinverse matrix differential operator.
\end{example}
\noindent\textit{Solution.} Because  $2 +3i$ is 1-fold root of the characteristic equation\linebreak
$k^2-4k+13=0$, it follows that the particular solution of \eqref{aab} will be (due to \eqref{espx},\eqref{respx}) in the form 
\[y = xe^{2x}\left( (Ax + B)\sin{\beta x} + (Cx + D)\cos{\beta x} \right)\]
In other words, the particular solution \(y\) will be in the vector space
\[V = \text{span}(x^{2}e^{2x}\sin{3x},x^{2}e^{2x}\cos{3x},xe^{2x}\sin{3x},xe^{2x}\cos{3x},e^{2x}\sin{3x}, e^{2x}\cos{3x})\]
with the basis
\[B = \left\{ x^{2}e^{2x}\sin{3x},x^{2}e^{2x}\cos{3x},xe^{2x}\sin{3x},xe^{2x}\cos{3x},e^{2x}\sin{3x}, e^{2x}\cos{3x} \right\}\]
The matrix differential operator
\[\mathcal{D}_B=\begin{bmatrix}
2 & - 3 & 0 & 0 & 0 & 0 \\
3 & 2 & 0 & 0 & 0 & 0 \\
2 & 0 & 2 & - 3 & 0 & 0 \\
0 & 2 & 3 & 2 & 0 & 0 \\
0 & 0 & 1 & 0 & 2 & - 3 \\
0 & 0 & 0 & 1 & 3 & 2 \\
\end{bmatrix}\]

\[\left( \mathcal{D_B}^{2} - 4\mathcal{D}_B +13\mbf{I}_{6} \right)\mbf{y}_B =[2xe^{2x}\cos{3x}]_B\]
After editing the previous equation, we get
\[
\left[\begin{array}{cccc|cc}
0 & 0 & 0 & 0 & 0 & 0\\
0 & 0 & 0 & 0 & 0 & 0\\\hline
0 & -12 & 0 & 0 & 0 & 0\\
12 & 0 & 0 & 0 & 0 & 0\\
2 & 0 & 0 & -6 & 0 & 0\\
0 & 2 & 6 & 0 & 0 & 0\\
\end{array}\right]
\mbf{y}_B=\begin{bmatrix}
0 \\
0 \\
0 \\
2 \\
0 \\
0 \\
\end{bmatrix}\]
\[\mbf{y}_B=\left[\begin{array}{cccc|cc}
0 & 0 & 0 & 0 & 0 & 0\\
0 & 0 & 0 & 0 & 0 & 0\\\hline
0 & -12 & 0 & 0 & 0 & 0\\
12 & 0 & 0 & 0 & 0 & 0\\
2 & 0 & 0 & -6 & 0 & 0\\
0 & 2 & 6 & 0 & 0 & 0\\
\end{array}\right]^{+}
\begin{bmatrix}
0 \\
0 \\
0 \\
2 \\
0 \\
0 \\
\end{bmatrix}\]

In accordance with Theorem \ref{T33} we determine the Moore-Penrose pseudoinverse. Then
\[\mbf{y}_B=
\renewcommand{\arraystretch}{1.9}
\left[\begin{array}{cc|cccc}
0 & 0 & 0 & \dfrac{1}{12} & 0 & 0\\
0 & 0 & -\dfrac{1}{12} & 0 & 0 & 0\\
0 & 0 & \dfrac{1}{36} & 0 & 0 & \dfrac{1}{6}\\
0 & 0 & 0 & \dfrac{1}{36} & -\dfrac{1}{6} & 0\\\hline
0 & 0 & 0 & 0 & 0 & 0\\
0 & 0 & 0 & 0 & 0 & 0\\
\end{array}\right]
\begin{bmatrix}
0 \\
0 \\
0 \\
2 \\
0 \\
0 \\
\end{bmatrix}=\begin{bmatrix}
\dfrac{1}{6}\\
0\\
0\\
\dfrac{1}{18}\\
0\\
0\\
\end{bmatrix}\]
So, the particular solution is
\[y_{p} = \frac{1}{6}x^{2}e^{2x}\sin{3x} + \frac{1}{18} xe^{2x}\cos{3x}\qed \]

\begin{example} Find the integral
\[\int\left( 13xe^{2x}\sin{3x} - 13xe^{2x}\cos{3x} + 5e^{2x}\sin{3x} - 4e^{2x}\cos{3x} \right)dx\]
\end{example}
\noindent\textit{Solution}. We want to solve the differential equation
\[Dy = 13xe^{2x}\sin{3x} - 13xe^{2x}\cos{3x} + 5e^{2x}\sin{3x} - 4e^{2x}\cos{3x}\]
The solution will be in the vector space
\[V = \text{span}(xe^{2x}\sin{3x,}xe^{2x}\cos{3x,}e^{2x}\sin{3x,}e^{2x}\cos{3x})\]
with the basis
\[B =  \left\{ xe^{2x}\sin{3x},xe^{2x}\cos{3x},e^{2x}\sin{3x},e^{2x}\cos{3x} \right\}\]
In \eqref{schemat} we calculated
\[\mathcal{D}_B =\begin{bmatrix}
2 & - 3 & 0 & 0 \\
3 & 2 & 0 & 0 \\
1 & 0\  & 2 & - 3 \\
0 & 1 & 3 & 2 \\
\end{bmatrix}\]
We are having to solve the matrix equation
\[\mathcal{D}_B\mbf{y}_B=\begin{bmatrix}
13 \\
 - 13 \\
5 \\
 - 4 \\
\end{bmatrix}\]
Then
\[\mbf{y}_B=\mathcal{D}_B^{-1}\begin{bmatrix}
13 \\
 - 13 \\
5 \\
 - 4 \\
\end{bmatrix} = 
\renewcommand{\arraystretch}{1.9}
\left[\begin{array}{cccc}
\dfrac{2}{13} & \dfrac{3}{13} & 0 & 0 \\
 - \dfrac{3}{13} & \dfrac{2}{13} & 0 & 0 \\
\dfrac{5}{169} & - \dfrac{12}{169} & \dfrac{2}{13} & \dfrac{3}{13} \\
\dfrac{12}{169} & \dfrac{5}{169} & - \dfrac{3}{13} & \dfrac{2}{13} \\
\end{array}\right]\begin{bmatrix}
13 \\
 - 13 \\
5 \\
 - 4 \\
\end{bmatrix} = \begin{bmatrix}
 - 1 \\
 - 5 \\
\dfrac{15}{13} \\
- \dfrac{16}{13} \\
\end{bmatrix}\]

We have calculated that
\[\aligned
\int{\left( 13xe^{2x}\sin{3x} - 13xe^{2x}\cos{3x} + 5e^{2x}\sin{3x} - 4e^{2x}\cos{3x} \right)dx} \\ 
=- xe^{2x}\sin{3x} - 5xe^{2x}\cos{3x} + \frac{15}{13}e^{2x}\sin{3x} - \frac{16}{13}e^{2x}\cos{3x} + C
\endaligned\]\qed

Sometimes it is useful to combine a differential operator with a matrix differential operator. 
Relationship I in Table 2 allows us to reduce a matrix differential operator by two rows and two columns. In the following we will give an example of this.

\begin{example}{\label{domo}} Find a particular solution of 
\begin{equation}\label{edomo}
(D^{2} - 5D + 16)y=xe^{2x}\sin{3x}
\end{equation}
\end{example}
\noindent\emph{Solution.} Using I in the Table 2 we have
\begin{equation}\label{rdomo}
\begin{aligned}
y=&\frac1{D^{2} - 5D + 16}xe^{2x}\sin{3x} \\
&=\left( x - \frac1{D^{2} - 5D + 16}(2D - 5) \right)\frac1{D^{2} - 5D + 16}e^{2x}\sin{3x}\\
\end{aligned}
\end{equation}

All we need to do is to create a $2\times 2$ matrix differential operator instead of $4\times 4$.  The particular solution of \eqref{edomo} belongs to the vector space \linebreak
\(V = \text{span}\left( xe^{2x}\sin{3x}, xe^{2x}\cos{3x},e^{2x}\sin{3x}, e^{2x}\cos{3x}\right)\). It can be reduced using \eqref{rdomo} to the vector space
\(V = \text{span}\left(e^{2x}\sin{3x}, e^{2x}\cos{3x}\right)\) with the basis 
\(B = \{e^{2x}\sin{3x}, e^{2x}\cos{3x}\}\). \par
Then
\[\mathcal{D}_B =\begin{bmatrix}
2 & - 3 \\
3 & 2 \\
\end{bmatrix}\]
\[\mathcal{D}_B^{2}- 5\mathcal{D}_B+16\mbf{I}_{2}=\begin{bmatrix}
1 & 3 \\
- 3 & 1 \\
\end{bmatrix}\]
\[\left(\mathcal{D}_B^{2}- 5\mathcal{D}_B+16\mbf{I}_{2}\right)^{-1}=
\renewcommand{\arraystretch}{2}
\begin{bmatrix}
\dfrac{1}{10} & - \dfrac{3}{10} \\
\dfrac{3}{10} & \dfrac{1}{10} \\
\end{bmatrix}\]
\[\left(2\mathcal{D}_B - 5\mbf{I}_{2}\right)^{-1}=\begin{bmatrix}
- 1 & - 6 \\
6 & - 1 \\
\end{bmatrix}\]
where \(\mbf{I}_{2}\) is the $2\times 2$ identity matrix.
The right-hand side of the relationship \eqref{rdomo} expressed by the differential operator is

\[\left( x\mbf{I}_{2} - \left( \mathcal{D}_B^{2}- 5\mathcal{D}_B+16\mbf{I}_{2} 
\right)^{- 1}(2\mathcal{D}_B - 5\mbf{I}_{2} \right)\left( \mathcal{D}_B^{2}- 5
\mathcal{D}_B+16\mbf{I}_{2} \right)^{- 1}\begin{bmatrix}
1 \\
0 \\
\end{bmatrix}=\]
\[\left( x \renewcommand{\arraystretch}{2}
\begin{bmatrix}
1 & 0 \\
0 & 1 \\
\end{bmatrix} - \begin{bmatrix}
\dfrac{1}{10} & - \dfrac{3}{10} \\
\dfrac{3}{10} & \dfrac{1}{10} \\
\end{bmatrix}\begin{bmatrix}
 - 1 & - 6 \\
6 & - 1 \\
\end{bmatrix} \right) \renewcommand{\arraystretch}{2}
\begin{bmatrix}
\dfrac{1}{10} & - \dfrac{3}{10} \\
\dfrac{3}{10} & \dfrac{1}{10} \\
\end{bmatrix}\begin{bmatrix}
1 \\
0 \\
\end{bmatrix}=\]
\[\left( \begin{bmatrix}
x & 0 \\
0 & x \\
\end{bmatrix} - \renewcommand{\arraystretch}{2}
\begin{bmatrix} 
\dfrac{1}{10} & - \dfrac{3}{10} \\
\dfrac{3}{10} & \dfrac{1}{10} \\
\end{bmatrix}\begin{bmatrix}
 - 1 & - 6 \\
6 & - 1 \\
\end{bmatrix} \right)\renewcommand{\arraystretch}{2}
\begin{bmatrix}
\dfrac{1}{10} \\
\dfrac{3}{10} \\
\end{bmatrix} =\]
\[\renewcommand{\arraystretch}{2}
\begin{bmatrix}
\dfrac{1}{10}x \\
\frac{3}{10}x \\
\end{bmatrix} -
 \begin{bmatrix}\renewcommand{\arraystretch}{2}
\dfrac{1}{10} & - \dfrac{3}{10} \\
\dfrac{3}{10} & \dfrac{1}{10} \\
\end{bmatrix}\renewcommand{\arraystretch}{2}
\begin{bmatrix}
 - \dfrac{19}{10} \\
\dfrac{3}{10} \\
\end{bmatrix} = \renewcommand{\arraystretch}{2}
\begin{bmatrix}
\dfrac{1}{10}x + \dfrac{7}{25} \\
\dfrac{3}{10}x + \dfrac{27}{50} \\
\end{bmatrix}\]

We can rewrite this using functions as
\[\frac{1}{D^2 - 5D + 16}xe^{2x}\sin{3x} = \left( \frac{1}{10}x + \frac{7}{25} \right)e^{2x}\sin{3x} + \left( \frac{3}{10}x + \frac{27}{50} \right)e^{2x}\cos{3x}\]
so the particular solution of the differential equation \eqref{edomo} is
\[y=\left( \frac{1}{10}x + \frac{7}{25} \right)e^{2x}\sin{3x} + \left( \frac{3}{10}x + \frac{27}{50} \right)e^{2x}\cos{3x}\] \qed

In the previous example, the number resulting from the right-hand side of the differential equation  \eqref{edomo} was not the root of the corresponding characteristic equation of \eqref{edomo}.
In the next example it will be.  

\begin{example}\label{coi} Determine the~particular solution of the equation
\begin{equation}\label{ecoi}
\left( D^{2} + 1 \right)y = x\cos x
\end{equation}
\end{example}
\emph{Solution.} The complex number $ i$ is a single root of the characteristic equation \( k^{2} + 1 = 0\), therefore the particular solution of the differential equation
 \eqref{ecoi} will be in the vector space
\[V =\textrm{span}\left(x^{2}\sin{x}, x^{2}\cos{x},  x\sin x,  x\cos{x},\sin x,\cos{x}\right)\]
with the basis for $V$ 
 \[B = \{x^2\sin{x}, x^{2}\cos x, x\sin x, x\cos{x},\sin x,\cos x\}\]
The pseudoinverse matrix differential operator will be a $6\times 6$ matrix.  
We will present three methods of solution: using a matrix differential operator, a combined method  and using a complex variable\\
a) using a matrix differential operator\\
We have the matrix differential operator
\[\mathcal{D}_B=\begin{bmatrix}
0 & -1 & 0 & 0 & 0 & 0\\
1 & 0 & 0 & 0 & 0 & 0\\
2 & 0 & 0 & -1 & 0 & 0\\
0 & 2 & 1 & 0 & 0 & 0\\
0 & 0 & 1 & 0 & 0 & -1\\
0 & 0 & 0 & 1 & 1 & 0\end{bmatrix}\]
\[(\mathcal{D}_B^2+\mbf{I}_6)\mbf{y}_B=[x\cos x]_B\]
\[\left[\begin{array}{cccc|cc}
0 & 0 & 0 & 0 & 0 & 0\\
0 & 0 & 0 & 0 & 0 & 0\\\hline
0 & -4 & 0 & 0 & 0 & 0\\
4 & 0 & 0 & 0 & 0 & 0\\
2 & 0 & 0 & -2 & 0 & 0\\
0 & 2 & 2 & 0 & 0 & 0
\end{array}\right]\mbf{y}_B=
\begin{bmatrix}0\\
0\\
0\\
1\\
0\\
0\end{bmatrix}\]
Using a Moore-Penrose pseudoinverse matrix (Theorem \ref{T33} and \ref{SLR}) we have
\[\mbf{y}_B=(\mathcal{D}_B^2+\mbf{I}_6)^+[x\cos x]_B\]
\[\mbf{y}_B=
\left[\begin{array}{cc|cccc}
0 & 0 & 0 & \frac{1}{4} & 0 & 0\\
0 & 0 & -\frac{1}{4} & 0 & 0 & 0\\
0 & 0 & \frac{1}{4} & 0 & 0 & \frac{1}{2}\\
0 & 0 & 0 & \frac{1}{4} & -\frac{1}{2} & 0\\\hline
0 & 0 & 0 & 0 & 0 & 0\\
0 & 0 & 0 & 0 & 0 & 0
\end{array}\right]
\begin{bmatrix}0\\
0\\
0\\
1\\
0\\
0\end{bmatrix}=
\begin{bmatrix}\frac{1}{4}\\
0\\
0\\
\frac{1}{4}\\
0\\
0\end{bmatrix}\]
So the particular solution of \eqref{ecoi} is 
\[y=\frac14x^2\sin x+\frac14x\cos x\]
b) We can reduce the size of the matrix $\mathcal{D}_B$ by using I in the Table 2.
\[\frac{1}{D^{2} + 1}x\cos x = x\frac{1}{D^{2} + 1}\cos x - \frac{1}{D^{2} + 1}2D
\frac{1}{D^{2} + 1}\cos x \]

Let's solve \(\dfrac{1}{D^{2} + 1}\cos x\) which corresponds to the particular solution of the differential equation $(D^2+1)y=\cos x$. This particular solution belongs to the vector space
\[V = \text{span}(x\sin x, x\cos x, \sin x, \cos x) \]
with the basis 
\(B = \{ x\sin x, x\cos x, \sin x, \cos x\} \)
Then
\[\mathcal{D}_{B}=\begin{bmatrix}
0 & - 1 & 0 & 0 \\
1 & 0 & 0 & 0 \\
1 & 0 & 0 & - 1 \\
0 & 1 & 1 & 0 \\
\end{bmatrix}\]
\[\aligned
 \mathcal{D}^2_{B}+\mbf{I}_4)\mbf{y}_B&=[\cos x]_B\\
\left[\begin{array}{cc|cc}
0 & 0 & 0 & 0 \\
0 & 0 & 0 & 0 \\\hline
0 & - 2 & 0 & 0 \\
2 & 0 & 0 & 0 \\
\end{array}\right]\mbf{y}_B&=\begin{bmatrix}
0 \\
0 \\
0 \\
1 \\
\end{bmatrix}
\endaligned\]
\[\aligned
\mbf{y}_B&=\left[\begin{array}{cc|cc}
0 & 0 & 0 & 0 \\
0 & 0 & 0 & 0 \\\hline
0 & - 2 & 0 & 0 \\
2 & 0 & 0 & 0 \\
\end{array}\right]^+\begin{bmatrix}
0 \\
0 \\
0 \\
1 \\
\end{bmatrix}\\
\mbf{y}_B&=\left[\begin{array}{cc|cc}
0 & 0 & 0 & \frac{1}{2} \\
0 & 0 & - \frac{1}{2} & 0 \\\hline
0 & 0 & 0 & 0 \\
0 & 0 & 0 & 0 \\
\end{array}\right]\begin{bmatrix}
0 \\
0 \\
0 \\
1 \\
\end{bmatrix}= 
\begin{bmatrix}
\frac{1}{2} \\
0 \\
0 \\
0 \\
\end{bmatrix}
\endaligned \]

We have calculated that
\[\frac{1}{D^{2} + 1}\cos x = \frac{x\sin x}{2}\]
Similarly
\[\left(\frac{1}{D^{2} + 1}\sin x\right)_B =
\left[\begin{array}{cc|cc}
0 & 0 & 0 & \frac{1}{2} \\
0 & 0 & - \frac{1}{2} & 0 \\\hline
0 & 0 & 0 & 0 \\
0 & 0 & 0 & 0 \\
\end{array}\right]\begin{bmatrix}
0 \\
0 \\
1 \\
0 \\
\end{bmatrix}= 
\begin{bmatrix}
0 \\
-\frac12 \\
0 \\
0 \\
\end{bmatrix}\]
So 
\[\frac{1}{D^{2} + 1}\sin x=-\frac{x\cos x}x\] 
Let's continue to calculate
\[\frac{1}{D^{2} + 1}x\cos x = x\frac{x\sin x}{2}- \frac{1}{D^{2} + 1}2D
\frac{1}{D^{2} + 1}\cos x \]
\[\frac{1}{D^{2} + 1}x\cos x = x\frac{x\sin x}{2}- \frac{1}{D^{2} + 1}(\sin x +x\cos x) \]
\[\frac{1}{D^{2} + 1}x\cos x = x\frac{x\sin x}{2}+\frac{x\cos x}2 -\frac{1}{D^{2} + 1}x\cos x\]
From this, by expressing $\dfrac{1}{D^{2} + 1}x\cos x$, we have
\[\frac{1}{D^{2} + 1}x\cos x = \frac{x^2\sin x}{4}+\frac{x\cos x}4 \]

\noindent c) using a complex variable. To use J in the Table 2.  See also \cite{Ch}.
\[\aligned
y &= \frac{1}{D^{2} + 1}x\cos x =\frac{1}{D^{2} + 1}\left( x\frac{e^{ix} + e^{- ix}}2 \right) \\
&=\frac{1}{2}\left( e^{ix}\frac{1}{( D + i )^{2} + 1}x + e^{- ix}\frac{1}{( D - i )^{2} + 1}x \right) \\
&=\frac{1}{2}\left( e^{ix}\frac{1}{D( D + 2i)}x + e^{- ix}\frac{1}{D( D - 2i)}x \right)\\
&=\frac{1}{2}\left( e^{ix}\frac{1}{D}\left( - \frac{i}{2} + \frac{D}{4} \right)x + e^{- ix}
\frac{1}{D}\left( \frac{i}{2} + \frac{D}{4} \right)x \right)\\
&= \frac{1}{2}\left( e^{ix}\frac{1}{D}\left( - \frac{ix}{2} + \frac{1}{4} \right) + e^{- ix}
\frac{1}{D}\left( \frac{ix}{2} + \frac{1}{4}\right) \right)\\
&=\frac{1}{2}\left( e^{ix}\left( - \frac{ix^{2}}{4} + \frac{x}{4} \right) + e^{- ix}
\left( \frac{ix^{2}}{4} + \frac{x}{4} \right) \right) \\
&=\frac{ix^2}{4}\frac{- e^{ix} + e^{- ix}}{2} + \frac{x}{4}\frac{e^{ix} + e^{- ix}}{2}\\
&=\frac{x^2}{4}\frac{e^{ix} - e^{- ix}}{2i} + \frac{x}{4}\frac{e^{ix} + e^{- ix}}{2} = \frac{x^2\sin x}{4} + \frac{x\cos x}{4}
\endaligned \] \qed \par

In the previous example we compared the solution method with other methods. In the following example, we return to the differential operator for solving a linear ODE with a right-hand polynomial function. In this case, it is proposed to expand the operator $\frac1{\phi (D)}$ in powers of $D$. For example, in \cite{Ch} for  
\[\frac1{\phi(D)}f(x)=\frac1{a_nD^n+a_{n-1}D^{n-1}+\ldots+a_1D+a_0}f(x)\]
is the expand for $a_0\ne 0$
\begin{equation}\label{aai}
y  = \frac{1}{\phi(D)}f(x) = 
\sum_{k = 0}^{\infty}(-1)^{k}\frac{1}{a_0}\left( \frac{a_n}{a_0}D^n + \ldots + 
\frac{a_1}{a_0}D \right)^{k}f(x)
\end{equation}
In the case
\(a_{0} = a_{1} = \ldots = a_{k - 1} = 0\quad  (1 \leq n \leq n))\ \text{and }\ (a_{k} \neq 0\), then
\begin{equation}\label{aaj}
y =\frac{1}{\phi(D)}f(x) = D^{- k}\frac{1}{a_{n}D^{n - k} + a_{n - 1}D^{n - k - 1} + \ldots + a_{k}}f(x),
\end{equation}
where we would expand an inverse operator as a Maclaurin series in the sense
of \eqref{aai}.\\
The disadvantage \eqref{aai} of expanding as a Maclaurin series is the calculation
$\left( \dfrac{a_n}{a_0}D^n + \ldots + \dfrac{a_1}{a_0}D \right)^{k}$  although it is not necessary to count all members of the power. We will show a different approach to solving this problem.

\begin{theorem}\label{mcr}
 Let \(a_{0} \neq 0\), then the Maclaurin expansion
\[\frac{1}{a_{n}D^{n} + a_{n - 1}D^{n - 1} + \cdots + a_{1}D + a_{0}} = c_{0} + c_{1}D + c_{2}D^{2} + \ldots\]
where \(c_{k} = 0\)  for \(k < 0\), \(c_{0} = \dfrac{1}{a_{0}}\), 
\(c_{k} = \mbf{q}\cdot ( c_{k - n},c_{k - n + 1},\ldots,c_{k - 1})\), for \(k = 1,2,\ldots \)
is a dot product of  vectors\,
\(\mbf{q} =\dfrac{1}{a_{0}}(-a_{n}, - a_{n - 1},\ldots,-a_1)\), \\* \( (c_{k - n},c_{k - n + 1},\ldots,c_{k - 1}).\)
\end{theorem}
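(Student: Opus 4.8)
The plan is to treat the asserted identity as a statement about formal power series in the symbol $D$ and to pin down the coefficients $c_k$ by comparing coefficients on the two sides of
\[\left(\sum_{j=0}^{n} a_j D^j\right)\left(\sum_{k=0}^{\infty} c_k D^k\right) = 1.\]
Since $a_0 \neq 0$, the left factor is a unit in the ring of formal power series, so a reciprocal series $\sum_{k\ge 0} c_k D^k$ exists and is unique; the substance of the theorem is merely to extract the recurrence its coefficients must satisfy. (If an analytic rather than purely formal reading is wanted, one may add that the series has positive radius of convergence, being the Taylor series at $0$ of the rational function $1/\phi(t)$, which is analytic at $t=0$ because $\phi(0)=a_0\neq 0$; this is not needed for the algebraic identity among the $c_k$.)

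First I would form the Cauchy product of the two series. The coefficient of $D^m$ in the product equals $\sum_{j=0}^{n} a_j c_{m-j}$, where we use the convention $c_\ell = 0$ for $\ell < 0$ already adopted in the statement; this convention also painlessly covers the case $m < n$, in which the sum simply truncates from the bottom. Equating this with the coefficient of $D^m$ on the right-hand side gives, for $m = 0$, the equation $a_0 c_0 = 1$, hence $c_0 = 1/a_0$, and, for every $m \ge 1$, the equation $\sum_{j=0}^{n} a_j c_{m-j} = 0$.

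Next I would solve the $m \ge 1$ equation for the highest-index unknown $c_m$. Isolating the $j=0$ term and dividing by $a_0$ yields
\[c_m = \frac{1}{a_0}\left(-a_1 c_{m-1} - a_2 c_{m-2} - \cdots - a_n c_{m-n}\right),\]
which is exactly the dot product of $\mbf{q} = \frac{1}{a_0}(-a_n, -a_{n-1}, \ldots, -a_1)$ with the window $(c_{m-n}, c_{m-n+1}, \ldots, c_{m-1})$, once the entries are listed in matching order. Together with $c_0 = 1/a_0$ and the convention $c_k = 0$ for $k < 0$, this recurrence determines every $c_k$ uniquely by induction on $k$, which completes the proof.

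I do not anticipate a genuine obstacle beyond bookkeeping. The one point requiring care is the index alignment between $\mbf{q}$ and the coefficient window: $\mbf{q}$ lists $-a_n$ first and $-a_1$ last, so it must be dotted against $(c_{m-n}, \ldots, c_{m-1})$ read in that same order. The secondary point worth an explicit remark is that the stated convention on negative indices is precisely what lets the single recurrence stand for all $m \ge 1$ without separating off the sub-range $1 \le m < n$.
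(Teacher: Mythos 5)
Your proposal is correct and follows essentially the same route as the paper: expand the product $\bigl(a_nD^n+\cdots+a_1D+a_0\bigr)\bigl(c_0+c_1D+c_2D^2+\cdots\bigr)=1$, compare coefficients of like powers of $D$ to get $c_0=1/a_0$ and the relation $a_0c_k+a_1c_{k-1}+\cdots+a_nc_{k-n}=0$, and solve for $c_k$. Your added remarks on unit invertibility in the formal power series ring and on index alignment are harmless refinements of the same argument.
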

\begin{proof} 
The statement follows from the identity
\begin{equation}\label{rmcr}
1 = \left( a_{n}D^{n} + a_{n - 1}D^{n - 1} + \cdots + a_{1}D + a_{0} \right)\left( c_{0} + c_{1}D + c_{2}D^{2} + \ldots \right)
\end{equation}
after expanding the right-hand side of \eqref{rmcr} and comparing coefficients of the same powers of \(D\) we get
\[c_{0} = \frac{1}{a_{0}},\]
The coefficient of \(D^{k},\ k = 1,2,3,\ldots\) we get from the equation
\[c_{k}a_{0} + c_{k - 1}a_{1} + c_{k - 2}a_{2} + \ldots + c_{k - n + 1}a_{n - 1} + c_{k - n}a_{n} = 0,\]
where \(c_{k} = 0\) for \(k < 0.\)
From this, we immediately have the proof of  the Theorem \ref{mcr}. 
\end{proof}

\begin{example}
Find a particular solution of the equation
\begin{equation}\label{aak}
(D^3-D^2+2D+1)y=x^3+2x^2+3x
\end{equation} 
using the Maclaurin expansion of the inverse differential operator.
\end{example}
\noindent\emph{Solution.}
Compute the coefficients of the Maclaurin expansion 
\[ \aligned
c_{0} &= 1\\
c_{1} &= ( -1,1, -2)\cdot( c_{- 2},c_{- 1},c_{0}) =( -1,1, -2)\cdot( 0,0,1) = - 2\\
c_{2} &= ( -1,1, -2)\cdot( c_{- 1},c_{0},c_{1}) = ( -1,1, -2)\cdot(0,1,-2)  = 5\\
c_{3} &= ( -1,1, -2 )\cdot( c_{0},c_{1},c_{2} ) = ( -1,1, -2 )\cdot( 1, -2,5) = -13\\
c_{4} &=( -1,1, -2)\cdot( c_{1},c_{2,}c_{3}) =(-1,1, -2)\cdot( -2, 5, - 13 ) = 33\\
\vdots&\\
\endaligned\]
Then we have
\[\aligned
&\frac1{D^3-D^2+2D+1}(x^3+2x^2+3x)\\
&=(1-2D+5D^2-13D^3+33D^4+\cdots)(x^3+2x^2+3x)\\
&=(1-2D+5D^2-13D^3)x^3+(1-2D+5D^2)2x^2+(1-2D)3x\\
&=(x^3-6x^2+30x-78)+(2x^2-8x+20)+(3x-6)\\
&=x^3-4x^2+25x-64\\
\endaligned\]
The particular solution of the differential equation \eqref{aak}
\[y=x^3-4x^2+25x-64\]\par
We will also present a shortened calculation using a matrix differential ope\-rator.
The solution of \eqref{aak} belongs to the vector space \(V=\textrm{span}(x^3,x^2,x,1)\) with
basis \(B=\{x^3,x^2,x,1\}\). Then 
\[\mathcal{D }_B=\begin{bmatrix}
0&0&0&0\\
3&0&0&0\\
0&2&0&0\\
0&0&1&0\\
\end{bmatrix},\quad
\mathcal{D }_B^2=\begin{bmatrix}
0&0&0&0\\
0&0&0&0\\
6&0&0&0\\
0&2&0&0\\
\end{bmatrix},\quad
\mathcal{D }_B^3=\begin{bmatrix}
0&0&0&0\\
0&0&0&0\\
0&0&0&0\\
6&0&0&0\\
\end{bmatrix}\]
\[\mathcal{D }_B^n=\begin{bmatrix}
0&0&0&0\\
0&0&0&0\\
0&0&0&0\\
0&0&0&0\\
\end{bmatrix},\quad \textrm{for}\quad n>3 \]
\[\aligned
\mbf{y}_B&=(\mbf{I}_4-2\mathcal{D }_B+5\mathcal{D }_B^2-13\mathcal{D }_B^3)
[x^3+2x^2+3x]_{B}\\
&=\begin{bmatrix}1 & 0 & 0 & 0\\
-6 & 1 & 0 & 0\\
30 & -4 & 1 & 0\\
-78 & 10 & -2 & 1\end{bmatrix}
\begin{bmatrix}
1\\
2\\
3\\
0\\
\end{bmatrix}=\begin{bmatrix}
1\\
-4\\
25\\
-64
\end{bmatrix}\\
\endaligned\]
The particular solution
\[y=x^3-4x^2+25x-64\] \qed

\begin{example}\label{blc}Using block matrices determine a particular solution of the differential equation
\begin{equation}\label{rblc}
(D^2 + 6D + 13){y} = 2xe^{- 3x}\sin{2x} - 4xe^{- 3x}\cos{2x}
\end{equation}
\end{example}
\noindent\emph{Solution}. Since the roots of the characteristic equation
are \(- 3 +  2i,  - 3 - 2i\) compared to the right-hand side of the
differential equation, it follows that the particular solution will be in a vector space
\[\aligned
V =& \textrm{span}\left(x^{2}e^{- 3x}\sin{2x},x^{2}e^{- 3x}\cos{3x},xe^{- 3x}\sin{2x},
xe^{- 3x}\cos{3x},\right. \\
&\left.e^{- 3x}\sin{2x}, e^{- 3x}\cos{3x}\right)\\ 
\endaligned \]

with basis B of V
\[\aligned
B =& \left\{x^{2}e^{- 3x}\sin{2x},x^{2}e^{- 3x}\cos{3x},xe^{- 3x}\sin{2x},
xe^{- 3x}\cos{3x},\right. \\
&\left.e^{- 3x}\sin{2x}, e^{- 3x}\cos{3x}\right\}\\ 
\endaligned \]
\[\mathcal{D }_B=
\left[\begin{array}{cc|cc|cc}
 - 3 & - 2 & 0 & 0 & 0 & 0 \\
2 & - 3 & 0 & 0 & 0 & 0 \\\hline
2 & 0 & - 3 & - 2 & 0 & 0 \\
0 & 2 & 2 & - 3 & 0 & 0 \\\hline
0 & 0 & 1 & 0 & - 3 & - 2 \\
0 & 0 & 0 & 1 & 2 & - 3 \\
\end{array}\right]\]
We express this matrix as a block matrix
\[\mathcal{D}_B=\begin{bmatrix}
\mbf{C} & \mbf{0} & \mbf{0} \\
\mbf{2}\mbf{I} & \mbf{C} & \mbf{0} \\
\mbf{0} & \mbf{I} & \mbf{C} \\
\end{bmatrix}\]
where
\[\mbf{C }=\begin{bmatrix}  - 3 & - 2 \\ 2 & - 3 \\ \end{bmatrix},\quad
\mbf{I} = \begin{bmatrix} 1 & 0 \\ 0 & 1 \\ \end{bmatrix},\quad
\mbf{0} = \begin{bmatrix} 0 & 0 \\ 0 & 0 \\ \end{bmatrix}\]
Using mathematical induction it can be proved that
\begin{equation}\label{mobl}
\mathcal{D}_B^{n}=\begin{bmatrix}
\mbf{C}^{n} & \mbf{0} & \mbf{0} \\
2n\mbf{C}^{n - 1} & \mbf{C}^{n} & \mbf{0} \\
{n(n - 1)\mbf{C}}^{n - 2} & n\mbf{C}^{n - 1} & \mbf{C}^{n} \\
\end{bmatrix},\quad n = 0,1,2,\ldots
\end{equation}
It is easy verify that equation \eqref{mobl} is true also for $n=-1,-2,\ldots$, i.e.
\[\mathcal{D}_B^{- n}=\begin{bmatrix}
\mbf{C}^{- n} & \mbf{0} & \mbf{0} \\
 - 2n\mbf{C}^{- n - 1} & \mbf{C}^{- n} & \mbf{0} \\
n(n + 1)\mbf{C}^{- n - 2} & - n\mbf{C}^{- n - 1} & \mbf{C}^{- n} \\
\end{bmatrix}\quad n = 1,2,3,\ldots\]

Then
{\small \[\aligned 
&\mathcal{D}_B^{2}+6\mathcal{D}_B+13\begin{bmatrix}
\mbf{I} & \mbf{0} & \mbf{0} \\
\mbf{0} & \mbf{I} & \mbf{0} \\
\mbf{0} & \mbf{0} & \mbf{I} \\
\end{bmatrix}=\begin{bmatrix}
\mbf{C}^{2} & \mbf{0} & \mbf{0} \\
4\mbf{C} & \mbf{C}^{2} & \mbf{0} \\
2\mbf{I} & 2\mbf{C} & \mbf{C}^{2} \\
\end{bmatrix}+6\begin{bmatrix}
\mbf{C} & \mbf{0} & \mbf{0} \\
\mbf{2}\mbf{I} & \mbf{C} & \mbf{0} \\
\mbf{0} & \mbf{I} & \mbf{C} \\
\end{bmatrix}+13\begin{bmatrix}
\mbf{I} & \mbf{0} & \mbf{0} \\
\mbf{0} & \mbf{I} & \mbf{0} \\
\mbf{0} & \mbf{0} & \mbf{I} \\
\end{bmatrix}\\
& \mbf=\begin{bmatrix}
\mbf{C}^{2}+6\mbf{C} +13\mbf{I} & \mbf{0} & \mbf{0} \\
4\mbf{C} +12\mbf{I} & \mbf{C}^{2}+6\mbf{C} +13\mbf{I} & \mbf{0} \\
2\mbf{I} & 2\mbf{C }+6\mbf{I} & \mbf{C}^{2}+6\mbf{C }+13\mbf{I} \\
\end{bmatrix}=\begin{bmatrix}
\mbf{0} & \mbf{0} & \mbf{0} \\
4\mbf{C} +12\mbf{I} & \mbf{0} & \mbf{0} \\
2\mbf{I} & 2\mbf{C} +6\mbf{I} & \mbf{0} \\
\end{bmatrix}\\
\endaligned \]}

because
\[\mbf{C}^{2}+6\mbf{C} +13\mbf{I }=\begin{bmatrix}
5 & 12 \\
 - 12 & 5 \\
\end{bmatrix} + 6\begin{bmatrix}
 - 3 & - 2 \\
2 & - 3 \\
\end{bmatrix} + 13\begin{bmatrix}
1 & 0 \\
0 & 1 \\
\end{bmatrix} = \begin{bmatrix}
0 & 0 \\
0 & 0 \\
\end{bmatrix} = \mbf{0}\]

Next
\[2\mbf{C} +6\mbf{I} =\begin{bmatrix}
0 & - 4 \\
4 & 0 \\
\end{bmatrix}\]
\[4\mbf{C} +12\mbf{I }=2\left( 2\mbf{C} +6\mbf{I} \right) = 
\begin{bmatrix}
0 & - 8 \\
8 & 0 \\
\end{bmatrix}\]
\[2\mbf{I} =\begin{bmatrix}
2 & 0 \\
0 & 2 \\
\end{bmatrix}\]

Dividing the matrix into $2\times 2$ blocks, we have
\[\left[\begin{array}{cc|c}
\mbf{0} & \mbf{0} & \mbf{0} \\\hline
4\mbf{C} +12\mbf{I} & \mbf{0} & \mbf{0} \\
2\mbf{I} & 2\mbf{C} +6\mbf{I} & \mbf{0} \\
\end{array}\right]\]

From Theorem \ref{T33}, we have

\begin{equation}\label{aac}
\aligned
\mbf{y}_B &= \begin{bmatrix}
\mbf{0} & \mbf{0} & \mbf{0} \\
4\mbf{C} +12\mbf{I} & \mbf{0} & \mbf{0} \\
2\mbf{I} & 2\mbf{C} +6\mbf{I} & \mbf{0} \\
\end{bmatrix}^{\mbf{+}}\begin{bmatrix}
\mbf{b}_{1} \\
\mbf{b}_{2} \\
\mbf{b}_{3} \\
\end{bmatrix}\\&=\begin{bmatrix}
\mbf{0} & \frac{1}{2}( 2\mbf{C} +6\mbf{I})^{-1} & \mbf{0} \\
\mbf{0} & - ( 2\mbf{C}+6\mbf{I})^{-2} & ( 2\mbf{C} +6\mbf{I})^{-1} \\
\mbf{0} & \mbf{0} & \mbf{0} \\
\end{bmatrix}\begin{bmatrix}
\mbf{b}_{1} \\
\mbf{b}_{2} \\
\mbf{b}_{3} \\
\end{bmatrix}\\&=\begin{bmatrix}
\frac{1}{2}( 2\mbf{C} +6\mbf{I})^{-1}\mbf{b}_{2} \\
-( 2\mbf{C} +6\mbf{I})^{-2}\mbf{b}_{2} \\
\mbf{b}^{*} \\
\end{bmatrix}\\
\endaligned
\end{equation}

where
 \(\mbf{b}_{1}=\begin{bmatrix} 0 \\ 0 \\ \end{bmatrix}\),
\(\mbf{b}_{2}=\begin{bmatrix} 2 \\  - 4 \\ \end{bmatrix}\),
\(\mbf{b}_{3}=\begin{bmatrix} 0 \\ 0 \\ \end{bmatrix},\)
\(\mbf{b}^{*}=\begin{bmatrix} 0 \\ 0 \\ \end{bmatrix}\).\\

Let us verify that the product of the following block matrices is the identity matrix
\[\aligned &\begin{bmatrix}
4\mbf{C} +12\mbf{I} & \mbf{0} \\
2\mbf{I} & 2\mbf{C} +6\mbf{I} \\
\end{bmatrix}\begin{bmatrix}
\dfrac{1}{2}(2\mbf{C} +6\mbf{I})^{-1} & \mbf{0} \\
-( 2\mbf{C}+6\mbf{I})^{- 2} & (2\mbf{C} +6\mbf{I})^{- 1} \\
\end{bmatrix} \\&= \begin{bmatrix}
(4\mbf{C} +12\mbf{I})\dfrac{1}{2}( 2\mbf{C}+6\mbf{I})^{-1} & \mbf{0} \\
2\mbf{I}\dfrac{1}{2}( 2\mbf{C}+6\mbf{I})^{-1} - (2\mbf{C} +6\mbf{I}
( 2\mbf{C} +6\mbf{I})^{- 2} & (2\mbf{C}+6\mbf{I})(2\mbf{C}
+6\mbf{I})^{- 1} \end{bmatrix} \\&=
\begin{bmatrix}
\mbf{I} & \mbf{0} \\
( 2\mbf{C}+6\mbf{I})^{- 1} - (2\mbf{C} +6\mbf{I})^{-1} & \mbf{I} \\
\end{bmatrix} = \begin{bmatrix}
\mbf{I} & \mbf{0} \\
\mbf{0} & \mbf{I} \\
\end{bmatrix}
\endaligned\]
The same is true in reverse order of multiplying the matrices. 
Let us compute the elements of the last matrix in \eqref{aac}

\[\frac{1}{2}( 2\mbf{C }+6\mbf{I})^{-1}\mbf{b}_{2}=\dfrac{1}{2}
\begin{bmatrix}
0 & - 4 \\
4 & 0 \\
\end{bmatrix}^{- 1}\begin{bmatrix}
2 \\
 - 4 \\
\end{bmatrix} = \dfrac{1}{2}\begin{bmatrix}
0 & \dfrac{1}{4} \\
 - \dfrac{1}{4} & 0 \\
\end{bmatrix}\begin{bmatrix}
2 \\
 - 4 \\
\end{bmatrix} = \begin{bmatrix}
 - \dfrac{1}{2} \\
 - \dfrac{1}{4} \\
\end{bmatrix}\]

\[\aligned
-( 2\mbf{C} +6\mbf{I})^{-2}\mbf{b}_{2}&= -\begin{bmatrix}
0 & - 4 \\
4 & 0 \\
\end{bmatrix}^{-2}\begin{bmatrix}
2 \\
 - 4 \\
\end{bmatrix}= -\begin{bmatrix}
0 & \dfrac{1}{4} \\
 - \dfrac{1}{4} & 0 \\
\end{bmatrix}^{2}\begin{bmatrix}
2 \\
 - 4 \\
\end{bmatrix}\\& = - \begin{bmatrix}
 - \dfrac{1}{16} & 0 \\
0 & - \dfrac{1}{16} \\
\end{bmatrix}\begin{bmatrix}
2 \\
 - 4 \\
\end{bmatrix} = \begin{bmatrix}
\dfrac{1}{8} \\
 - \dfrac{1}{4} \\
\end{bmatrix}
\endaligned\]

\[\mbf{0}\mbf{b}_{1}+\mbf{0}\mbf{b}_{2}+\mbf{0}\mbf{b}_{3}=\begin{bmatrix}
0 \\
0 \\
\end{bmatrix}\]

From this we can find the  particular solution

\[\mbf{y}_{B} = \begin{bmatrix}
\dfrac{1}{2}( 2\mbf{C} +6\mbf{I})^{-1}\mbf{b}_{2} \\
-( 2\mbf{C} +6\mbf{I})^{-2}\mbf{b}_{2} \\
\mbf{b}^{*} \\
\end{bmatrix}=
\renewcommand{\arraystretch}{2.1}\left[
\begin{array}{c}
 - \dfrac{1}{2} \\
 - \dfrac{1}{4} \\\hline
\dfrac{1}{8} \\
 - \dfrac{1}{4} \\\hline
0 \\
0 \\
\end{array}\right]
=\begin{bmatrix}
 - \dfrac{1}{2} \\
 - \dfrac{1}{4} \\
\dfrac{1}{8} \\
 - \dfrac{1}{4} \\
0 \\
0 \\
\end{bmatrix}\]
or

\[y = - \frac{1}{2}x^{2}e^{-3x}\sin{2x} - \frac{1}{4}\ x^{2}e^{-3x}\cos{2x} + \frac{1}{8} xe^{- 3x}\sin{2x} - \frac{1}{4}xe^{- 3x}\cos{2x}\]

Although the original matrix \(\mathcal{D}_B\) was $6\times 6$, during the
calculation of particular solution of the differential equation \eqref{rblc}
we calculated the $2\times 2$ inverse matrix only. 
\qed

\section{Conclusion}

The operational method is a fast and universal mathematical tool for
obtaining solutions of differential equations.  Determining particular solutions of ordinary nonhomogeneous linear differential equations with constant coefficients using the undetermined coefficients method and the differential operator method are generally known.In particular, distinct from the differential operator method introduced in the literature, we propose and highlight utilizing the definition of the pseudoinverse matrix differential operator to determine a
particular solution of differential equations. This method is simple to understand and to apply, compared to some cases of the differential operator method. However, it requires the determination of an inverse or pseudoinverse matrix, which generally has a special type. If the matrix is
singular, then we will take a pseudoinverse matrix - Moore Penrose pseudoinverse matrix - instead of the inverse matrix. The paper shows that for its determination, we need to calculate only the inverse submatrix of the considered matrix. Finally, the technique of calculating a particular solution by expressing a matrix differential operator as a block matrix is illustrated. Combination of the operational method, the method of undetermined coefficients and application of the matrix differential method provides a powerful instrument to determine particular solutions of differential equations.

\end{document}